\numberwithin{equation}{subsection}
\newtheorem{theorem}{Theorem}[section]
\newtheorem*{theorem*}{Theorem}
\newtheorem{lemma}[theorem]{Lemma}
\newtheorem{proposition}[theorem]{Proposition}
\newtheorem{corollary}[theorem]{Corollary}
\newtheorem*{corollary*}{Corollary}
\newtheorem{question}[theorem]{Question}
\newtheorem*{question*}{Question}
\newtheorem*{conjecture*}{Conjecture}
\newtheorem{definition}[theorem]{Definition}
\theoremstyle{remark}
\newtheorem{example}[theorem]{Example}
\newtheorem{remark}[theorem]{Remark}
\theoremstyle{remark}
\newtheorem*{remark*}{Remark}
\DeclareMathOperator{\id}{id}
\newcommand{\op}{\mathrm{op}} 
\newcommand{\ie}{i.e.\xspace}
\newcommand{\eg}{e.g.\xspace}
\newcommand{\loccit}{\textit{loc.\,cit.}\xspace}
\newcommand{\too}{\longrightarrow}
\newcommand{\cA}{\mathcal{A}}
\newcommand{\cC}{\mathcal{C}}
\newcommand{\cE}{\mathcal{E}}
\newcommand{\cF}{\mathcal{F}}
\newcommand{\cG}{\mathcal{G}}
\newcommand{\cK}{\mathcal{K}}
\newcommand{\cL}{\mathcal{L}}
\newcommand{\cO}{\mathcal{O}}
\newcommand{\cU}{\mathcal{U}}
\newcommand{\fA}{\mathfrak{A}}
\newcommand{\fL}{\mathfrak{L}}
\newcommand{\bbG}{\mathbb{G}}
\newcommand{\bbQ}{\mathbb{Q}}
\newcommand{\bbZ}{\mathbb{Z}}
\DeclareMathOperator{\Aut}{Aut}
\newcommand{\Ab}{\mathsf{Ab}} 
\newcommand{\B}{\mathrm{B}} 
\newcommand{\C}{\mathsf{C}} 
\newcommand{\ch}{\mathrm{Ch}} 
\DeclareMathOperator{\Ext}{Ext}
\DeclareMathOperator{\EExt}{\textbf{E}xt}
\DeclareMathOperator{\ger}{\textsc{Gerbes}} 
\newcommand{\Grp}{\mathsf{Grp}} 
\newcommand{\HH}{\mathbf{H}} 
\DeclareMathOperator{\Hom}{Hom}
\DeclareMathOperator{\HOM}{\mathsf{Hom}}
\newcommand{\iso}{\cong}
\renewcommand{\Im}{\operatorname{Im}}
\newcommand{\mmu}{\boldsymbol{\mu}} 
\newcommand{\Ob}{\mathrm{Ob}}
\newcommand{\shC}{\C\sptilde} 
\newcommand{\shAbC}{\shC_\mathrm{ab}} 
\newcommand{\shGrpC}{\shC_\mathrm{grp}} 
\DeclareMathOperator{\Spec}{Spec}
\newcommand{\ts}{\textsuperscript}
\DeclareMathOperator{\Tot}{Tot}
\DeclareMathOperator{\tors}{\textsc{Tors}} 
\newcommand{\T}{\mathsf{T}} 
\newcommand{\X}{\mathsf{X}} 
\title[Cup products and the Heisenberg group]{Cup products, the Heisenberg
  group, and codimension two algebraic cycles}
\author{Ettore Aldrovandi and Niranjan Ramachandran} 
\address{Ettore Aldrovandi, Department of Mathematics, Florida State University, Tallahassee, FL 32306-4510 USA.}
\email{aldrovandi@math.fsu.edu}
\urladdr{http://www.math.fsu.edu/~ealdrov}
\address{Niranjan Ramachandran, Department of Mathematics, University of Maryland, College Park, MD 20742 USA.}
\email{atma@math.umd.edu}
\urladdr{http://www2.math.umd.edu/~atma/}
\subjclass[2010]{14C25, 14F42, 55P20, 55N15} 
\date{\today}
\keywords{Algebraic cycles, gerbes, Heisenberg group, higher categories}
\begin{document}
\begin{abstract}We define higher categorical invariants (gerbes) of codimension two algebraic cycles and provide a categorical interpretation of the intersection of divisors on a smooth proper algebraic variety. This generalization of the classical relation between divisors and line bundles furnishes a new perspective on the Bloch-Quillen formula. 
 \end{abstract}
\maketitle

\tableofcontents
\newpage

\begin{flushright}
\itshape Des buissons lumineux fusaient comme des gerbes;\\
Mille insectes, tels des prismes, vibraient dans l'air;\\
Le vent jouait avec l'ombre des lilas clairs,\\
Sur le tissu des eaux et les nappes de l'herbe.\\
Un lion se couchait sous des branches en fleurs;\\
Le daim flexible errait l\`a-bas, pr\`es des panth\`eres;\\
Et les paons d\'eployaient des faisceaux de lueurs\\
Parmi les phlox en feu et les lys de lumi\`ere.\\

\normalfont  --Emile Verhaeren (1855-1916), \\
 \textit{Le paradis} (Les rythmes souverains)
\end{flushright}
 
\section{Introduction}

This aim of this paper is to define higher categorical invariants (gerbes) of codimension two algebraic cycles and provide a categorical interpretation of the intersection of divisors on a smooth proper algebraic variety. This generalization of the classical relation between divisors and line bundles furnishes a new perspective on the classical Bloch-Quillen formula relating Chow groups and algebraic K-theory.

Our work is motivated by the following three basic  questions. 

\begin{enumerate} 
\item Let $A$ and $B$ be abelian sheaves  on a manifold (or algebraic variety) $X$. Given $\alpha \in H^1(X,A)$ and $\beta \in H^1(X,B)$, one has their cup-product $\alpha \cup \beta \in H^2(X, A\otimes B)$. We recall that $H^1$ and $H^2$ classify equivalence classes of torsors and gerbes\footnote{For us, the term "gerbe" signifies a stack in groupoids which is locally non-empty and locally connected (\S \ref{sec:gerbes}).  It is slightly different from the ancient gerbes of  \textit{Acids, alkalies and salts: their manufacture and applications, Volume 2} (1865) by Thomas Richardson and Henry Watts, pp. 567-569: 
\begin{quote}
  \S 4. Gerbes

  This firework is made in various ways, generally throwing up a luminous and sparkling jet of fire, somewhat resembling a 
water-spout: hence its name. Gerbes consist of a straight, cylindrical case, sometime made with wrought iron (if the gerbe is 
of large dimensions). ...  Mr. Darby has invented an entirely novel and beautiful gerbe, called the Italian gerbe..."
\end{quote}}:
\begin{align*}
  H^1(X, A) \quad &\longleftrightarrow \quad \text{Isomorphism classes of $A$-torsors} \\
  H^2(X, A) \quad &\longleftrightarrow \quad \text{Isomorphism classes of $A$-gerbes};
\end{align*}
we may pick torsors $P$ and $Q$ representing $\alpha$ and $\beta$ and ask
\begin{question}\label{Q1}
  Given $P$ and $Q$, is there a natural construction of a gerbe $G_{P,Q}$ which manifests the cohomology class $\alpha \cup\beta =[P] \cup [Q]$?
\end{question}
The above question admits the following algebraic-geometric analogue. 
\item Let $X$ be a smooth proper variety over a field $F$. Let $Z^i(X)$ be the abelian group of algebraic cycles of codimension $i$ on $X$ and let $CH^i(X)$ be the Chow group of algebraic cycles of codimension $i$ modulo rational equivalence.  The  isomorphism
  \begin{equation*}
    CH^1(X) \overset{\sim}{\too} H^1(X, \cO^*) 
  \end{equation*}
  connects (Weil) divisors and invertible sheaves (or $\bbG_m$-torsors). While divisors form a group,  $\bbG_m$-torsors on $X$ form a Picard category $\tors_X(\bbG_m)$ with the monoidal structure provided by the Baer sum of torsors. Any divisor $D$ determines a $\bbG_m$-torsor $\cO_D$;  the torsor 
 $\cO_{D +D'}$ is isomorphic to the Baer sum of $\cO_D$ and $\cO_{D'}$. In other words, one has an additive map \cite[II, Proposition 6.13]{Hartshorne}
 \begin{equation}\label{zee-one}
 Z^1(X) \to \tors_X(\bbG_m) \qquad D \mapsto \cO_D.
 \end{equation}
  
\begin{question}\label{Q2}
What is a natural generalization of \eqref{zee-one} to higher codimension cycles?
\end{question}
Since $\tors_X(\bbG_m)$ is a Picard category, one could expect the putative additive maps  on $Z^i(X)$ to land in Picard categories or their generalizations.  
\begin{question}\label{Q3}  Is there a categorification of the intersection pairing
\begin{equation}\label{int-pairing}
CH^1(X) \times CH^1(X) \to CH^2(X)?
\end{equation}
\end{question}
More generally, one can ask for a categorical interpretation of the entire Chow ring of $X$. 
 \end{enumerate}

\subsection*{Main results} 

 Our first result is an affirmative answer to Question \ref{Q1}; the key observation is that a certain Heisenberg group \emph{animates} the cup-product. 

\begin{theorem}
  \label{primo}
  Let $A, B$ be abelian sheaves on a topological space or scheme $X$.
  \begin{enumerate}
  \item 
 There is a canonical functorial Heisenberg\footnote{The usual  Heisenberg group, a central extension of $A \times B$ by $\mathbb C^*,$ arises from a biadditive map $A \times B \to \mathbb C^*$.}  sheaf $H_{A,B}$ on $X$ which sits in an exact sequence
 \begin{equation*}
   0 \to A\otimes B \to H_{A,B} \to A \times B \to 0;
 \end{equation*}
 the  sheaf $H_{A,B}$ (of non-abelian groups) is a central extension of $A \times B$ by $A\otimes B$.

\item The associated boundary map
  \begin{equation*}
    \partial: H^1(X, A)\times H^1(X,B) = H^1(X, A \times B) \to H^2(X, A\otimes B)
  \end{equation*}
  sends the class $(\gamma, \delta)$ to the cup-product $\gamma \cup \delta$.
 
\item Given torsors $P$ and $Q$ for $A$ and $B$, view $P\times Q$ as a $A\times B$-torsor on $X$. Let $\cG_{P,Q}$ be the gerbe of local liftings (see \S \ref{lifting}) of $P\times Q$ to a $H_{A,B}$-torsor; its band is $A\otimes B$ and its class in $H^2(X, A\otimes B)$ is $[P]\cup[Q]$. 

\item The gerbe $\cG_{P,Q}$ is covariant functorial in $A$ and $B$ and contravariant functorial in $X$. 
\item The gerbe $\cG_{P,Q}$ is trivial (equivalent to the stack of $A\otimes B$-torsors) if either $P$ or $Q$ is trivial.
\end{enumerate}
\end{theorem}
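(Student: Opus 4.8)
The plan is to prove (v) by exhibiting an honest global lift of $P\times Q$ to an $H_{A,B}$-torsor. By definition of the gerbe of local liftings (the $\cG_{P,Q}$ of part~(iii)), a global lift of $P\times Q$ to an $H_{A,B}$-torsor is precisely a global object of $\cG_{P,Q}$, and a gerbe banded by the abelian sheaf $A\otimes B$ that possesses a global object is neutral, \ie canonically equivalent to $\tors_X(A\otimes B)$. Thus everything reduces to constructing such a lift when one of the two torsors is trivial.

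The key observation concerns the explicit model of $H_{A,B}$ furnished by part~(i), whose underlying sheaf is $(A\otimes B)\times A\times B$ with multiplication
\[
(c,a,b)\cdot(c',a',b')=\bigl(c+c'+a\otimes b',\,a+a',\,b+b'\bigr).
\]
The defining cocycle $a\otimes b'$ vanishes identically on each of the subgroups $A\times\{0\}$ and $\{0\}\times B$, so the central extension splits canonically over them: the maps
\[
s_A\colon A\to H_{A,B},\ a\mapsto(0,a,0),\qquad s_B\colon B\to H_{A,B},\ b\mapsto(0,0,b)
\]
are homomorphisms of sheaves of groups lifting the inclusions $A\times\{0\}\hookrightarrow A\times B$ and $\{0\}\times B\hookrightarrow A\times B$.

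Now suppose $Q$ is trivial, so that $P\times Q$ is induced from the $A$-torsor $P$ along $A\times\{0\}\hookrightarrow A\times B$. I would set $\widetilde T:=P\times^{A}H_{A,B}$, the extension of structure group of $P$ along the homomorphism $s_A$; this is an $H_{A,B}$-torsor. Because the composite $A\xrightarrow{s_A}H_{A,B}\to A\times B$ is the inclusion $a\mapsto(a,0)$, functoriality of extension of structure group identifies the image of $\widetilde T$ under $H_{A,B}\to A\times B$ with $P$ induced along $A\times\{0\}\hookrightarrow A\times B$, namely with $P\times Q$. Hence $\widetilde T$ is a global lift, $\cG_{P,Q}$ acquires a global object, and $\cG_{P,Q}\simeq\tors_X(A\otimes B)$. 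The case $P$ trivial is identical, with the roles of $s_A,s_B$ and of $P,Q$ interchanged.

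The step requiring the most care is the identification of the projection of $\widetilde T$ with $P\times Q$: one must check that extension of structure group is compatible with the quotient map $H_{A,B}\to A\times B$, which is a routine functoriality once the splitting $s_A$ is in hand. Conceptually the whole point is that triviality of one factor reduces the structure group of $P\times Q$ to a subgroup over which the Heisenberg extension is split, so no obstruction survives. For a purely cohomological confirmation one may instead invoke part~(iii): the class of $\cG_{P,Q}$ equals $[P]\cup[Q]$, which vanishes as soon as $[P]=0$ or $[Q]=0$; but the construction above has the advantage of producing the trivializing equivalence explicitly rather than merely certifying the vanishing of its class.
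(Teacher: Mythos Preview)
Your argument for part~(v) is correct and proceeds by a genuinely different route from the paper. The paper derives~(v) as an immediate consequence of the explicit \v Cech computation carried out for~(ii)--(iii): the class of the lifting gerbe is represented by the $2$-cocycle $\{a_{ij}\otimes b_{jk}\}$, which is visibly bilinear in the $1$-cocycles $\{a_{ij}\}$ and $\{b_{ij}\}$ for $P$ and $Q$, hence vanishes when either is the identity cocycle. You instead exploit the structural observation that the Heisenberg extension splits (as groups, not merely as sets) over each factor $A\times\{0\}$ and $\{0\}\times B$, because the defining cocycle $a\otimes b'$ vanishes there; extension of structure group along the splitting then produces an honest global object of $\cG_{P,Q}$ when one torsor is trivial. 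Your approach is more intrinsic---it does not pass through a cover or the \v Cech formula---and it yields a specific trivializing equivalence rather than merely the vanishing of the class in $H^2(X,A\otimes B)$; the paper's approach, on the other hand, costs nothing extra once the main cocycle calculation is in hand. Note, finally, that your proposal addresses only item~(v); the paper establishes~(i)--(iv) via the \v Cech computation just mentioned together with the general machinery of the gerbe of lifts for a central extension.
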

We prove this theorem over a general site $\C$. We also provide a natural interpretation of the (class of the) Heisenberg sheaf in terms of maps of 
Eilenberg-Mac~Lane objects in \S \ref{sec:cup}; it is astonishing that the explicit cocycle \eqref{eq:3} for the Heisenberg group (when $X =$ a point) turns out to coincide with the map on the level of Eilenberg-Mac~Lane objects over a general site $\C$; cf. \ref{prop:cup}.   

Here is another rephrasing of Theorem \ref{primo}: For abelian sheaves $A$ and $B$ on a site $\C$, there is a natural bimonoidal functor 
\begin{equation}\label{bimon} \tors _{\C}(A)\times \tors _{\C}(B) \too \ger _{\C} (A\otimes B) \qquad (P,Q) \mapsto \cG_{P,Q}\end{equation}
where $\tors_{\C}(A)$, $\tors_{\C}(B)$ are the Picard categories of $A$ and $B$-torsors on $\C$ and $\ger_{\C} (A\otimes B)$ is the Picard $2$-category of $A\otimes B$-gerbes on $\C$. Thus, Theorem \ref{primo} constitutes a categorification of the cup-product map 
 \begin{equation}
 \cup: H^1(A) \times H^1(B) \to H^2(A\otimes B).
 \end{equation}

Let us turn to Questions \ref{Q2} and \ref{Q3}. Suppose that $D$ and $D'$ are divisors on $X$ which intersect in the codimension-two cycle $D.D'$. Applying Theorem \ref{primo} to $\cO_D$ and $\cO_{D'}$ with $A = B =\bbG_m$, one has a $\bbG_m \otimes \bbG_m$-gerbe $\cG_{D, D'}$ on $X$.   
We now invoke the isomorphisms (the second is the fundamental Bloch-Quillen isomorphism)
\begin{equation*}
  \bbG_m \overset{\sim}{\too} \cK_1, \qquad CH^i(X) \underset{\eqref{BQ}}{\overset{\sim}{\too}} H^i(X, \cK_i)
\end{equation*}
where $\cK_i$ is the Zariski sheaf associated with the presheaf $U \mapsto K_i(U)$. 
  
Pushforward of $\cG_{D, D'}$ along $\cK_1 \times \cK_1 \to \cK_2$ gives a $\cK_2$-gerbe still denoted $\cG_{D, D'}$; we call this the Heisenberg gerbe attached to the codimension-two cycle $D.D'$. This raises the possibility of relating $\cK_2$-gerbes and codimension-two cycles on $X$, implicit in \eqref{BQ}. 

\begin{theorem}\label{secondo} (i) Any codimension-two cycle $\alpha \in Z^2(X)$ determines a $\cK_2$-gerbe $\cC_{\alpha}$ on $X$.

(ii) the class of $\cC_{\alpha}$ in $H^2(X, \cK_2)$ corresponds to $\alpha\in CH^2(X)$ under the Bloch-Quillen map \eqref{BQ}.

(iii)  the gerbe $\cC_{\alpha + \alpha'}$ is equivalent to the Baer sum of $\cC_{\alpha}$ and $\cC_{\alpha'}$. 

(iv) $\cC_{\alpha}$ and $\cC_{\alpha'}$ are equivalent as $\cK_2$-gerbes if and only if $\alpha =\alpha'$ in $CH^2(X)$.
\end{theorem}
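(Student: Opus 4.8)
The plan is to bypass any representation of $\alpha$ as an intersection of divisors — which is unavailable for a general codimension-two cycle — and instead build $\cC_\alpha$ directly from the Gersten (flasque) resolution of $\cK_2$. Since $X$ is smooth, Quillen's resolution theorem furnishes an exact sequence of Zariski sheaves
\begin{equation*}
0 \too \cK_2 \too \cR^0 \overset{d^0}{\too} \cR^1 \overset{d^1}{\too} \cR^2 \too 0,
\end{equation*}
with $\cR^0 = (i_\eta)_* K_2(F(X))$, $\cR^1 = \bigoplus_{x\in X^{(1)}} (i_x)_* F(x)^{\times}$ and $\cR^2 = \bigoplus_{x\in X^{(2)}} (i_x)_* \bbZ$; each $\cR^q$ is flasque. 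The decisive observation is that $\Gamma(X, \cR^2) = Z^2(X)$ and that, because the $\cR^q$ are flasque, the Bloch--Quillen isomorphism \eqref{BQ} is realized as the edge identification $H^2(X,\cK_2) = \Gamma(X,\cR^2)/d^1\Gamma(X,\cR^1) = CH^2(X)$. Writing $\cZ = \ker d^1 = \Im d^0$, I split the resolution into two short exact sequences of abelian sheaves,
\begin{equation*}
0 \too \cK_2 \too \cR^0 \too \cZ \too 0, \qquad 0 \too \cZ \too \cR^1 \overset{d^1}{\too} \cR^2 \too 0.
\end{equation*}

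For part (i), fix $\alpha \in Z^2(X) = \Gamma(X,\cR^2)$. Since $d^1$ is an epimorphism of sheaves, the sheaf $T_\alpha$ of local liftings of $\alpha$ along $d^1$ is a torsor under $\cZ$, whose class in $H^1(X,\cZ)$ is the image of $\alpha$ under the boundary of the second sequence. I then define $\cC_\alpha$ to be the gerbe of local liftings (\S\ref{lifting}) of the $\cZ$-torsor $T_\alpha$ along the central extension $\cR^0 \to \cZ$ of the first sequence. By the general properties of the lifting construction its band is $\ker(\cR^0 \to \cZ) = \cK_2$, so $\cC_\alpha$ is a $\cK_2$-gerbe, proving (i).

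Part (ii) is then bookkeeping with connecting homomorphisms. By construction $[\cC_\alpha] = \partial_{1}[T_\alpha]$, where $\partial_1 : H^1(X,\cZ) \to H^2(X,\cK_2)$ is the boundary of the first sequence, and $[T_\alpha] = \partial_2(\alpha)$ for the boundary $\partial_2 : \Gamma(X,\cR^2) \to H^1(X,\cZ)$ of the second. The composite $\partial_1\circ\partial_2$ is exactly the edge homomorphism attached to the flasque resolution, i.e. the Gersten-theoretic form of \eqref{BQ}; hence $[\cC_\alpha]$ is the Bloch--Quillen image of the class of $\alpha$ in $CH^2(X)$. For (iii) I use that both boundary maps are group homomorphisms: $T_{\alpha+\alpha'}$ is the contracted product (Baer sum) of $T_\alpha$ and $T_{\alpha'}$, and the lifting construction carries Baer sums of $\cZ$-torsors to Baer sums of $\cK_2$-gerbes, giving a coherent equivalence $\cC_{\alpha+\alpha'}\simeq \cC_\alpha \boxplus \cC_{\alpha'}$, the categorified form of the additivity of $\partial_1$. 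Finally (iv) is formal: $\cK_2$-gerbes are classified up to equivalence by $H^2(X,\cK_2)$, so $\cC_\alpha \simeq \cC_{\alpha'}$ iff $[\cC_\alpha]=[\cC_{\alpha'}]$, which by (ii) and the injectivity of \eqref{BQ} holds iff $\alpha=\alpha'$ in $CH^2(X)$.

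The main obstacle is (ii), and more precisely two identifications that are easy to assert but must be checked with care. The first is that the class of the gerbe of liftings genuinely equals $\partial_1\partial_2(\alpha)$; this rests on the description of the lifting gerbe's class in \S\ref{lifting} together with the flasqueness of $\cR^0,\cR^1$ (which forces $\partial_1$ to be an isomorphism and $\partial_2$ to be surjective with kernel $d^1\Gamma(X,\cR^1)$). The second, and the genuinely delicate point linking this theorem to Theorem \ref{primo}, is the comparison for intersection cycles: one must verify that when $\alpha = D.D'$ the gerbe $\cC_\alpha$ constructed here is equivalent to the Heisenberg gerbe $\cG_{D,D'}$, i.e. that the Gersten boundary maps (the tame symbols) match the class of the extension $H_{\bbG_m,\bbG_m}$ pushed forward to $\cK_2$. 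I expect this comparison, rather than the general construction, to absorb most of the work.
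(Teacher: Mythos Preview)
Your proposal is correct and follows essentially the same route as the paper: the paper packages your two-step splitting of the Gersten resolution into the ``four-term complex'' formalism of \S\ref{four-term}, and your lifting-gerbe description of $\cC_\alpha$ is exactly Corollary~\ref{diversi}(ii) (cf.\ Remark~\ref{trivial-eta}), with your $\partial_1\circ\partial_2$ being the map $d^2$ of Lemma~\ref{comparisons}. One small point: the comparison with the Heisenberg gerbe that you flag at the end is not part of this theorem at all---it is the content of Theorem~\ref{terzo}---so you need not worry about it here.
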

The Gersten gerbe $\cC_{\alpha}$ of $\alpha$ admits a geometric description, closely analogous to that of the $\bbG_m$-torsor $\cO_D$ of a divisor $D$; see Remark \ref{empty}. The Gersten sequence \eqref{gersten} is key to the construction of $\cC_{\alpha}$. One has an additive map 
\begin{equation}\label{zee-two} Z^2(X) \to \ger_X(\cK_2) \qquad \alpha \mapsto \cC_{\alpha}.\end{equation}

When $\alpha = D.D'$ is the intersection of two divisors, there are two $\cK_2$-gerbes attached to it: the Heisenberg gerbe $\cG_{D,D'}$ and the Gersten gerbe $\cC_{\alpha}$; these are abstractly equivalent as their classes in $H^2(X, \cK_2)$ correspond to $\alpha$. More is possible. 

\begin{theorem}\label{terzo}
If $\alpha \in Z^2(X)$ is the intersection $D.D'$ of divisors $D, D' \in Z^1(X)$, then there is a natural equivalence $\Theta: \cC_{\alpha} \to \cG_{D, D'}$ between the Gersten and Heisenberg $\cK_2$-gerbes attached to $\alpha = D.D'$.
\end{theorem}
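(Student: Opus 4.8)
The plan is to exhibit both $\cC_{\alpha}$ and $\cG_{D,D'}$ as \emph{gerbes of local liftings} (in the sense of \S\ref{lifting}) for suitable extensions with band $\cK_2$, and then to produce $\Theta$ as the functor induced by a morphism of the underlying lifting data. The decisive general fact I would lean on is that a morphism of $\cK_2$-gerbes inducing the identity on the common band is automatically an equivalence; thus it suffices to build \emph{any} such morphism, after which the equivalence assertion and naturality in $(D,D')$ follow formally. Abstract equivalence is of course already guaranteed: by Theorem \ref{secondo}(ii) the class of $\cC_{\alpha}$ is the Bloch--Quillen image of $\alpha=D.D'$, while by Theorem \ref{primo}(iii) the class of $\cG_{D,D'}$ is $[\cO_D]\cup[\cO_{D'}]$ pushed to $H^2(X,\cK_2)$, and these coincide because cup product is compatible with the intersection product under \eqref{BQ}. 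The content of the theorem is to upgrade this numerical coincidence to a canonical functor.

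First I would fix the two lifting presentations. Splitting the Gersten resolution \eqref{gersten}
\[ 0 \too \cK_2 \too \cF^0 \too \cF^1 \too \cF^2 \too 0, \]
with $\cF^0=(i_\eta)_* K_2(k(X))$, $\cF^1=\bigoplus_{x\in X^{(1)}}(i_x)_* k(x)^*$ and $\cF^2=\bigoplus_{x\in X^{(2)}}(i_x)_*\bbZ$, through the image sheaf $\cE=\ker(\cF^1\to\cF^2)$ yields
\[ 0\too\cE\too\cF^1\too\cF^2\too 0, \qquad 0\too\cK_2\too\cF^0\too\cE\too 0. \]
Since $\cF^1$ is flasque, the cycle $\alpha\in\cF^2(X)=Z^2(X)$ defines an $\cE$-torsor $T_\alpha$ of local liftings along the first sequence, and $\cC_{\alpha}$ is the $\cK_2$-gerbe of liftings of $T_\alpha$ along the second (this is the geometric description underlying Remark \ref{empty} and \eqref{zee-two}). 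On the Heisenberg side, $\cG_{D,D'}$ is by construction the pushforward along $\cK_1\otimes\cK_1\to\cK_2$ of the gerbe of liftings of the $\cK_1\times\cK_1$-torsor $\cO_D\times\cO_{D'}$ (via $\bbG_m\cong\cK_1$) along the Heisenberg extension $H_{\cK_1,\cK_1}$ of Theorem \ref{primo}(i).

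The heart of the construction is a compatibility between these two extensions realized by the symbol map. I would promote $\cK_1\otimes\cK_1\to\cK_2$ to a pairing of the flasque resolutions: the tame symbol together with the product on $K$-theory assembles the codimension-one Gersten data of $\cO_D$ and of $\cO_{D'}$ into the codimension-two Gersten data of $D.D'$, thereby producing a morphism of lifting data relating the pushed-forward Heisenberg presentation to the Gersten presentation $(T_\alpha,\ \cF^0\to\cE)$ that is the identity on $\cK_2$, and hence the comparison functor $\Theta$ in the stated direction. Concretely, on a cover trivializing $D=\mathrm{div}(f_i)$ and $D'=\mathrm{div}(g_i)$, the gerbe $\cG_{D,D'}$ is described by the explicit cocycle \eqref{eq:3} of \ref{prop:cup}, namely the Steinberg symbol $\{f_i/f_j,\, g_j/g_k\}$, while $\cC_{\alpha}$ is described by the tame-symbol representative of $D.D'$; the required $1$-cochain of local equivalences is exactly the one exhibiting these two $\cK_2$-valued $2$-cocycles as cohomologous, and assembling it over the cover gives $\Theta$.

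The main obstacle is \emph{coherence}: one must check that the comparison $1$-cochain satisfies the gerbe patching condition, i.e.\ that the chosen coboundary between the symbol cocycle $\{f_i/f_j,\,g_j/g_k\}$ and the Gersten cocycle of $D.D'$ is itself compatible on triple and quadruple overlaps. This is precisely a two-categorical refinement of the identity ``cup product $=$ intersection product'' for Bloch--Quillen, and it is where the Steinberg relation and the bimultiplicativity and skew-symmetry of the symbol enter decisively, together with the functoriality of $\cG_{P,Q}$ in its arguments from Theorem \ref{primo}(iv); the triviality statement of Theorem \ref{primo}(v) handles the degenerate strata where $D$ and $D'$ fail to meet properly, ensuring that $\Theta$ is defined globally. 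Once coherence is verified, $\Theta$ induces the identity on the band $\cK_2$ and is therefore an equivalence, and its construction directly from the defining data of $D$ and $D'$ renders it natural in the pair, completing the proof.
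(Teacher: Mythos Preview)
Your strategy is in the same spirit as the paper's---both gerbes have the same class in $H^2(X,\cK_2)$, so the issue is to \emph{exhibit} a morphism---but the route you take to produce $\Theta$ is genuinely different from the paper's, and the step you flag as the ``heart'' is also where your sketch is thinnest.

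The paper does \emph{not} try to build a morphism of lifting data or a pairing of Gersten resolutions. Instead it works object-by-object: to each object $r\in\cC_\alpha(U)$ it directly assigns an $H$-torsor $\Theta_U(r)$ lifting $\cO_D\times\cO_{D'}$ on $U$. The trick is that an object $r$ (say $r=(C,g)$ with $g$ a rational function on a curve $C$) lets one rewrite $\alpha\cap U$ as the intersection of $C\cap U$ with a \emph{principal} divisor $C'=\mathrm{div}(\tilde g)$, and hence introduces an auxiliary Heisenberg gerbe $\cG_{C\cap U,\,C'}$ that is \emph{trivial} on $U$ (Theorem~\ref{primo}(v)). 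Since $\cC_\alpha(U)$ is nonempty, $\cG_{D,D'}\rvert_U$ is also trivial, so a \v Cech $1$-cochain $w$ trivializing the cocycle of $\cG_{C\cap U,C'}$, together with a $1$-cochain $v$ realizing the equivalence of the two trivial gerbes, is used to correct a na\"ive lift $(a_{ij},b_{ij},c_{ij})$ into an honest $H$-cocycle. Functoriality in $r$ and compatibility with restriction are then checked.

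Your plan instead asks for a structural comparison: a map of short exact sequences (or of the associated lifting problems) between the Heisenberg extension $0\to\cK_2\to H\to\cK_1\times\cK_1\to 0$ sitting over $\cO_D\times\cO_{D'}$ and the Gersten extension $0\to\cK_2\to\cF^0\to\cE\to 0$ sitting over $T_\alpha$. But these lifting problems live over \emph{different} bases, and the sentence ``promote $\cK_1\otimes\cK_1\to\cK_2$ to a pairing of the flasque resolutions \ldots\ thereby producing a morphism of lifting data'' is exactly the missing ingredient: a chain-level product on Gersten complexes (\`a la Rost/Grayson) does exist, but turning it into a morphism of \emph{these two particular} lifting presentations, compatible with the Heisenberg $2$-cocycle $(a,b,a',b')\mapsto a\otimes b'$, is precisely the content you would need to supply. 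Likewise, ``the required $1$-cochain \ldots\ exhibiting these two $\cK_2$-valued $2$-cocycles as cohomologous'' is not determined by the data: any two such cochains differ by a $\cK_2$-valued $1$-cocycle, so you must specify one canonically from $(D,D')$ (and from $r$), or else naturality is lost. The paper's object-by-object construction sidesteps both issues by manufacturing the $1$-cochain out of $r$ via the auxiliary gerbe $\cG_{C\cap U,C'}$.

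In short: your outline is a plausible and more conceptual program, and if the pairing-of-resolutions map can be made precise it would give a cleaner proof; but as written, that step is asserted rather than constructed, and it is not the approach the paper takes.
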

Thus, Theorems \ref{primo}, \ref{secondo}, \ref{terzo} together provide the following commutative diagram thereby answering Question \ref{Q3}:

\begin{equation*}
  \begin{tikzcd}
    Z^1(X) \times Z^1(X)  \arrow[r, dotted, "{\textrm{no~map}}"] \arrow{d}{\eqref{zee-one}} & Z^2(X) \arrow{d}{\eqref{zee-two}}\\
    \tors_X(\bbG_m) \times \tors_X(\bbG_m) \arrow{r}{\eqref{bimon}} \arrow{d} & \ger_X(\cK_2) \arrow{d}\\
    CH^1(X) \times CH^1(X) \arrow{r}{\eqref{int-pairing}} & CH^2(X).
  \end{tikzcd}
\end{equation*}

We begin with a review of the basic notions and tools (lifting gerbe, four-term
complexes) in \S \ref{sec:prel} and then present the construction and properties
of the Heisenberg group in \S \ref{sec:heis} before proving Theorem \ref{primo}.
After a quick discussion of various examples in \S \ref{sec:Examples}, we turn
to codimension-two algebraic cycles in \S \ref{sec:codim2} and construct the
Gersten gerbe $\cC_{\alpha}$ and prove Theorems \ref{secondo}, \ref{terzo} using
the tools in \S \ref{sec:prel}.

\subsection*{Dictionary for codimension two cycles}
The above results indicate the viability of
viewing $\cK_2$-gerbes as natural invariants of codimension-two cycles on
$X$. Additional evidence is given by the following points: \footnote{Let $\eta \colon \textrm{Spec}~F_X\to X$ be the generic point of $X$ and write $K_i^{\eta}$ for the sheaf $\eta_* K_i(F_X)$; one has the map $\cK_i \to K_i^{\eta}$.}

\begin{itemize}
\item $\cK_2$-gerbes are present (albeit implicitly) in the Bloch-Quillen formula \eqref{BQ} for $i=2$. 

\item The  Picard category $\mathfrak P =\tors_X(\bbG_m)$ of $\bbG_m$-torsors on $X$ satisfies
  \begin{equation*}
    \pi_1(\mathfrak P) = H^0(X, \cO^*) = CH^1(X, 1), \qquad \pi_0(\mathfrak P) = H^1(X, \cO^*)= CH^1(X).
  \end{equation*}
  Similarly, the Picard $2$-category $\mathfrak C = \ger_X (\cK_2)$ of $\cK_2$-gerbes is closely related to Bloch's higher Chow complex \cite{Bloch2} in codimension two:
  \begin{equation*}
    \pi_2(\mathfrak C) = H^0(X, \cK_2)= CH^2(X,2) ,\quad \pi_1(\mathfrak C) = H^1(X, \cK_2) = CH^2(X, 1), \quad \pi_0(\mathfrak C) = H^2(X, \cK_2) \overset{\eqref{BQ}}{=} CH^2(X).
  \end{equation*}

\item The additive map arising from Theorem \ref{secondo}
  \begin{equation*}
    Z^2(X) \to \ger_X(\cK_2), \qquad \alpha \mapsto \cC_{\alpha}
  \end{equation*}
  gives the Bloch-Quillen isomorphism \eqref{BQ} on the level of $\pi_0$. It provides an answer to Question \ref{Q2} for codimension two cycles.

\item The Gersten gerbe $\cC_{\alpha}$ admits a simple algebro-geometric description (Remark \ref{trivial-eta}): 
Any $\alpha$ determines a $K_2^{\eta}/{\cK_2}$-torsor; then $\cC_{\alpha}$ is the gerbe of liftings of this torsor  to a $K_2^{\eta}$-torsor on $X$.

\item The gerbe $\cC_{\alpha}$ is canonically trivial outside of the support of $\alpha$ (Remark \ref{trivial-eta}).
\item Pushing the Gersten gerbe $\cC_{\alpha}$ along the map $\cK_2 \to \Omega^2$ produces an $\Omega^2$-gerbe which manifests the (de Rham) cycle class of $\alpha$ in $H^2(X, \Omega^2)$.  
\end{itemize}
The map \eqref{zee-one} is a part of the marvellous dictionary  \cite[II,
\S6]{Hartshorne} arising from the divisor sequence~\eqref{gersten1}:
\begin{equation*}
  \text{Divisors}\quad \longleftrightarrow \quad \text{Cartier~divisors}\quad
  \longleftrightarrow \quad \text{$\cK_1$-torsors} \quad \longleftrightarrow \quad\text{Line~bundles}\quad \longleftrightarrow \quad \text{Invertible~sheaves}.
\end{equation*}
More generally, from the Gersten sequence~\eqref{gersten} we obtain the following:
\begin{gather*}
  Z^1(X) \overset{\iso}{\too} H^0(X, K_1^{\eta}/\cK_1) \twoheadrightarrow H^1(X,\cK_1) \iso
  CH^1(X) \\
  Z^2(X) \twoheadrightarrow H^1(X, K_2^{\eta}/\cK_2) \overset{\iso}{\too} H^2(X,\cK_2) \iso CH^2(X) .
\end{gather*}

Inspired by this and by ref.\ \cite[Definition 3.2]{Bloch}, we call $K_2^{\eta}/\cK_2$-torsors as \emph{codimension-two Cartier cycles} on $X$. Thus the analog for codimension two cycles of the above dictionary reads
\begin{equation*}
  \text{Codimension two cycles}\quad \longleftrightarrow \quad \text{Cartier~cycles}\quad \longleftrightarrow \quad\text{$\cK_2$-gerbes}.
\end{equation*}

Since the Gersten sequence \eqref{gersten} exists for all $\cK_i$, it  is possible to generalize Theorem \ref{secondo} to higher codimensions thereby answering Question \ref{Q2}; however, this involves \textit{higher gerbes}. Any cycle of codimension $i>2$ determines a higher gerbe \cite{Breen94} with band $\cK_i$ (see \S \ref{end} for an example); this provides a new perspective on the Bloch-Quillen formula \eqref{BQ}. The higher dimensional analogues of  \eqref{bimon}, \eqref{int-pairing}, and Theorem \ref{secondo} will be pursued elsewhere. 

Other than the classical Hartshorne-Serre correspondence between certain
codimension-two cycles and certain rank two vector bundles, we are not aware of
any generalizations of this dictionary to higher codimension. In particular, our
idea of attaching \emph{a higher-categorical invariant to a higher codimension
  cycle} seems new in the literature. We expect that Picard $n$-categories play
a role in the functorial Riemann-Roch program of Deligne \cite{Deligne0}.

Our results are related to and inspired by the beautiful work of S.~Bloch \cite{Bloch}, L.~Breen \cite{Breen99}, J.-L.~Brylinski \cite{Brylinski}, A.~N.~Parshin \cite{Parshin}, B.~Poonen - E.~Rains \cite{PoonenRains}, and D.~Ramakrishnan \cite{Ramakrishnan} (see \S \ref{sec:Examples}).  Brylinski's hope\footnote{``In principle such ideas will lead to a geometric description of all regulator maps, once the categorical aspects have been cleared up. Hopefully this would lead to a better understanding of algebraic K-theory itself.''} \cite[Introduction]{Brylinski} for a higher-categorical geometrical interpretation of the  regulator maps from algebraic K-theory to  Deligne cohomology was a major catalyst.  In a forthcoming paper, we will  investigate the  relations between the Gersten gerbe and Deligne cohomology. 

\subsection*{Acknowledgements.} The second author's research was supported by the  2015-2016 ``Research and Scholarship Award''  from the
Graduate School, University of Maryland. We would like to thank J.~Rosenberg, J.~Schafer, and H.~Tamvakis for comments and suggestions.

\subsection*{Notations and conventions} Let $\C$ be a site. We write $\shC$ for
the topos of sheaves over $\C$, $\shAbC$ the abelian group objects of $\shC$,
namely the abelian sheaves on $\C$, and by $\shGrpC$ the sheaves of groups on
$\C$. Our notation for cohomology is as follows. For an abelian object $A$ of a
topos $\T$, $H^i(A)$ denotes the cohomology of the terminal object $e\in\T$ with
coefficients in $A$, namely $i^\mathrm{th}$ derived functor of
$\Hom_\T(e,A)$. This is the same as $\Ext^i_{\T_\mathrm{ab}}(\bbZ,A)$.  More
generally, $H^i(X,A)$ denotes the cohomology of $A$ in the topos $\T/X$. We use
$\HH$ for hypercohomology.

\section{Preliminaries}\label{sec:prel}

\subsection{Abelian Gerbes \cite{Giraud,Deligne,Breen94}}
\label{sec:gerbes} 
A gerbe $\cG$ over a site $\C$ is \textit{a stack in groupoids which is locally non-empty and locally connected}.

$\cG$ is locally nonempty if for every object $U$ of $\C$ there is a cover, say a local epimorphism, $V\to U$ such that the category $\cG(V)$ is nonempty; it is locally connected if given objects $x,y\in \cG(U)$ as above, then, locally on $U$, the sheaf $\Hom(x,y)$ defined above has sections. For each object $x$ over $U$ we can introduce the automorphism sheaf $\Aut_\cG(x)$, and by local connectedness all these automorphism sheaves are (non canonically) isomorphic.

In the sequel we will only work with \emph{abelian} gerbes, where there is a coherent identification between the automorphism sheaves $\Aut_\cG(x)$, for any choice of an object $x$ of $\cG$, and a fixed sheaf of groups $G$. In this case $G$ is necessarily abelian\footnote{The automorphisms in $\Aut(G)$ completely decouple, hence play no role.}, and the class of $\cG$ determines an element in $H^2(G)$, \cite[\S 2]{Breen94} (and also \cite{jardine}), where $H^i(G)=\Ext^i_{\shAbC}(\bbZ,G)$ denotes the standard cohomology with coefficients in the abelian sheaf $G$ in the topos $\shC$ of sheaves over $\C$.

Let us briefly recall how the class of $\cG$ is obtained using a \v{C}ech type argument. Assume for simplicity that the site $\C$ has pullbacks. Let $\cU=\{U_i\}$ be a cover of an object $X$ of $\C$. Let $x_i$ be a choice of an object of $\cG(U_i)$. For simplicity, let us assume that we can find morphisms $\alpha_{ij}\colon x_j\rvert_{U_{ij}}\to x_i\rvert_{U_{ij}}$. The class of $\cG$ will be represented by the 2-cocycle $\{c_{ijk}\}$ of $\cU$ with values in $G$ obtained in the standard way as the deviation for $\{\alpha_{ij}\}$ from satisfying the cocycle condition:
\begin{equation*}
  \alpha_{ij}\circ \alpha_{jk} = c_{ijk}\circ \alpha_{ik}.
\end{equation*}
In the above identity---which defines it---$c_{ijk}\in \Aut(x_i\rvert_{U_{ijk}})\iso G\rvert_{U_{ijk}}$. It is obvious that  $\{c_{ijk}\}$ is a cocycle.

Returning to stacks for a moment, a stack $\cG$ determines an object $\pi_0(\cG)$, defined as the sheaf associated to the presheaf of connected components of $\cG$, where the latter is the presheaf that to each object $U$ of $\C$ assigns the set of isomorphism classes of objects of $\cG(U)$. By definition, if $\cG$ is a gerbe, then $\pi_0(\cG)=*$. In general, writing just $\pi_0$ in place of $\pi_0(\cG)$, by base changing to $\pi_0$, namely considering the site $\C/\pi_0$, every stack $\cG$ is (tautologically) a gerbe over $\pi_0$ \cite{L-MB}.

\begin{example}\hfill
  \begin{enumerate}
  \item The trivial gerbe with band $G$ is the stack $\tors(G)$ of $G$-torsors.  Moreover, for any gerbe $\cG$, the choice of an object $x$ in $\cG(U)$ determines an equivalence of gerbes $\cG\rvert_U\iso \tors (G\rvert_U)$, over $\C/U$, where $G=\Aut_\cG(x)$. There is an equivalence $\tors(G)\iso \B_G$, the topos of (left) $G$-objects of $\shC$ (\cite{Giraud}).

  \item Any line bundle $L$ over an algebraic variety $X$ over $\bbQ$ determines a gerbe $\cG_n$ with band $\mmu_n$ (the sheaf of $n$\ts{th} roots of unity) for any $n >1$ as follows: Over any open set $U$, consider the category of pairs $(\cL, \alpha)$ where $\cL$ is a line bundle on $U$ and $\alpha: \cL ^{\otimes n} \xrightarrow{\sim} L$ is an isomorphism of line bundles over $U$. These assemble to the gerbe $G_n$ of $n$\ts{th} roots of $L$. This is an example of a lifting gerbe \S \ref{lifting}.
  \end{enumerate}
\end{example}

\begin{remark*} One also has the following interpretation, which shows that, in a fairly precise sense, a gerbe is the categorical analog of a torsor. Let $\cG$ be a gerbe
  over $\C$, let $\{U_i\}$ be a cover of $U\in \Ob(\C)$, and let $\{x_i\}$ be a
  collection of objects $x_i\in \cG(U_i)$. The $G$-torsors
  $E_{ij}=\Hom(x_j,x_i)$ are part of a ``torsor cocycle'' $\gamma_{ijk}\colon
  E_{ij} \otimes E_{jk}\to E_{ik}$, locally given by $c_{ijk}$, above, and
  subject to the obvious identity. Let $\tors(G)$ be the stack of $G$-torsors
  over $X$. Since $G$ is assumed abelian, $\tors(G)$ has a group-like
  composition law given by the standard Baer sum. The fact that $\cG$ itself is locally equivalent to $\tors(G)$, plus the datum of the torsor cocycle $\{E_{ij}\}$, show that $\cG$ is equivalent to a $\tors(G)$-torsor.
\end{remark*}

The primary examples of abelian gerbes occurring in this paper are the gerbe of
local lifts associated to a central extension and four-term complexes, described
in the next two sections.

\subsection{The gerbe of lifts associated with a central extension}
\label{lifting}
(See \cite{Giraud,Breen94,Brylinskib}.)
A central extension
\begin{equation}
  \label{eq:7}
  0 \too A \overset{\imath}{\too} E\overset{p}{\too} G \too 0
\end{equation}
of sheaves of groups determines a homotopy-exact sequence
\begin{equation*}
  \tors(A) \too \tors(E)\too \tors(G),
\end{equation*}
which is an extension of topoi with characteristic class $c \in H^2(\B_G,A)$. (Recall that $A$ is abelian and that $\tors(G)$ is equivalent to $\B_G$.)
If $\X$ is any topos over $\tors(G)\iso \B_G$, the gerbe of lifts is the gerbe with band $A$
\begin{equation*}
  \cE = \HOM_{\B_G}(\X,\B_E),
\end{equation*}
where $\HOM$ denotes the cartesian morphisms. The class $c(\cE)\in H^2(\X,A)$ is the pullback of $c$ along the map $\X\to \B_G$. By the universal property of $\B_G$, the morphism $\X\to \B_G$ corresponds to a $G$-torsor $P$ of $\X$, hence the $A$-gerbe $\cE$ is the gerbe whose objects are (locally) pairs of the form $(Q,\lambda)$, where $Q$ is an $E$-torsor and $\lambda\colon Q\to P$ an equivariant map. It is easy to see that an automorphism of an object $(Q,\lambda)$ can be identified with an element of $A$, so that $A$ is indeed the band of $\cE$.

Let us take $\X=\shC$, and let $P$ be a $G$-torsor. With the same assumptions as the end of \S~\ref{sec:gerbes}, let $X$ be an object of $\C$ with a cover $\lbrace U_i\rbrace$. In this case, the class of $\cE$ is computed by choosing $E\rvert_{U_i}$-torsors $Q_i$ and equivariant maps $\lambda_i\colon Q_i\to P\rvert_{U_i}$. Up to refining the cover, let $\alpha_{ij}\colon Q_j\to Q_i$ be an $E$-torsor isomorphism such that $\lambda_i\circ \alpha_{ij}=\lambda_j$. With these choices the class of $\cE$ is given by the cocycle $\alpha_{ij}\circ \alpha_{jk} \circ \alpha_{ik}^{-1}$.

\begin{remark}
  \label{rem:2}
  The above argument gives the well known boundary map \cite[Proposition 4.3.4]{Giraud}
  \begin{equation*}
    \partial^1\colon H^1(G) \too H^2(A)
  \end{equation*}
  (where we have omitted $\X$ from the notation). Dropping down one degree we get [ibid., Proposition 3.3.1]
  \begin{equation*}
    \partial^0\colon H^0(G) \too H^1(A).
  \end{equation*}
  In fact these are just the boundary maps determined by the above short exact sequence when all objects are abelian.  The latter can be specialized even further: if $g\colon *\to G$, then by pullback the fiber $E_g$ is an $A$-torsor \cite{GrothendieckSGA7}.
\end{remark}

\subsection{Four-term complexes}
\label{four-term}

Let $\shAbC$ be the category of abelian sheaves over the site $\C$. Below we shall be interested in four-term exact sequences of the form:
\begin{equation}
  \label{eq:6}
  0 \too A \overset{\imath}{\too} L_1\overset{\partial}{\too}
  L_0 \overset{p}{\too} B\too 0.
\end{equation}
Let $\ch_+(\shAbC)$ be the category of positively graded homological complexes of abelian sheaves. The above sequence can be thought of as a (non-exact) sequence 
\begin{equation*}
  0 \too A[1] \too [L_1\too L_0] \too B \too 0
\end{equation*}
of morphisms of $\ch_+(\shAbC)$. This sequence is short-exact in the sense of Picard categories, namely as a short exact sequence of Picard stacks
\begin{equation*}
  0 \too \tors(A) \too \cL \overset{p}{\too} B\too 0,
\end{equation*}
where $\cL$ is the strictly commutative Picard stack associated to the complex $L_1\to L_0$ and the abelian object $B$ is considered as a discrete stack in the obvious way. We have isomorphisms $A\iso \pi_1(\cL)$ and $B \iso \pi_0(\cL)$, where the former is the automorphism sheaf of the object $0\in \cL$ and the latter the sheaf of connected components (see \cite{Breen94,Breen99,DeligneDG}). It is also well known that the projection $p\colon \cL\to B$ makes $\cL$ a \emph{gerbe} over $B$. In this case the band of $\cL$ over $B$ is $A_B$, thereby determining a class in $H^2(B,A)$.\footnote{This is \emph{part} of the invariant classifying the four-term sequence, see the remarks in \cite[\S 6]{Breen99}.}

Rather than considering $\cL$ itself as a gerbe over $B$, we shall be interested in its fibers above generalized points $\beta\colon *\to B$. Let us put $\cA=\tors(A)$.
By a categorification of the arguments in \cite{GrothendieckSGA7},
the fiber $\cL_\beta$ above $\beta$ is an $\cA$-torsor, hence an $A$-gerbe, by the observation at the end of \S~\ref{sec:gerbes} (see also the equivalence described in \cite{Breen90}). $\cL_\beta$ is canonically equivalent to $\cA$ whenever $\beta=0$. Writing
\begin{equation*}
  \Hom_{\shC}(*,B) \iso \Hom_{\shAbC}(\bbZ,B) = H^0(B),
\end{equation*}
we have the homomorphism
\begin{equation}\label{doppio}
 \partial^2:  H^0(B) \too H^2(A),
\end{equation}
which sends $\beta$ to the class of $\cL_\beta$ in $H^2(A)$. The sum of $\beta$ and $\beta'$ is sent to the Baer sum of $\cL_\beta+\cL_{\beta'}$, and the characteristic class is additive.  In the following Lemma we show this map is the same as the one described in \cite[Théorème 3.4.2]{Giraud}.
\begin{lemma}
  \label{comparisons}
  \hfill
  \begin{enumerate}
  \item The map $\partial^2$ in \eqref{doppio} is the canonical cohomological map (iterated boundary map) \cite[Théorème 3.4.2]{Giraud}
    \begin{equation*}
      d^2: H^0(B) \too H^1(C) \too H^2(A)
    \end{equation*}
    ($C$ is defined below) arising from the four-term complex \eqref{eq:6}.
  \item The image of $\beta$ under $d^2$ is the class of  the gerbe $\cL_{\beta}$. 
  \end{enumerate}
\end{lemma}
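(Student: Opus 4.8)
The plan is to split the four-term sequence \eqref{eq:6} into two short exact sequences and then to check that the two-step \v{C}ech recipe computing the gerbe class $[\cL_\beta]$ reproduces, cochain by cochain, the iterated connecting homomorphism $d^2$. Concretely, I would set $C := \ker(p) = \Im(\partial)$, so that \eqref{eq:6} breaks into
\begin{equation*}
  0 \too A \overset{\imath}{\too} L_1 \overset{\bar\partial}{\too} C \too 0
  \qquad\text{and}\qquad
  0 \too C \too L_0 \overset{p}{\too} B \too 0,
\end{equation*}
and recall that, by definition, $d^2$ is the composite $H^0(B)\overset{\delta_C}{\too} H^1(C)\overset{\delta_A}{\too} H^2(A)$ of the connecting maps of these two sequences \cite[Théorème 3.4.2]{Giraud}. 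Conceptually, this is the exact categorical analogue, one degree up, of the fiber construction of Remark~\ref{rem:2}: the fiber $\cL_\beta$ plays the role of the $A$-torsor $E_g$, with $\tors(A)$ in place of $A$.

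Next I would record the explicit groupoid model of $\cL$: over an object $U$, the objects of $\cL(U)$ are the sections $x\in L_0(U)$, and a morphism $x\to y$ is a section $\ell\in L_1(U)$ with $\partial\ell = y-x$; this yields $\pi_0(\cL)=B$, $\pi_1(\cL)=A$, and identifies $\cL_\beta$ with the full substack on those $x$ with $p(x)=\beta$. I would then compute $[\cL_\beta]$ by the procedure of \S\ref{sec:gerbes}. Over a sufficiently fine cover $\{U_i\}$, choose objects $x_i\in L_0(U_i)$ with $p(x_i)=\beta\rvert_{U_i}$ (possible since $p$ is a local epimorphism) and, after refining, morphisms $\alpha_{ij}\colon x_j\to x_i$ given by sections $\ell_{ij}\in L_1(U_{ij})$ with $\partial\ell_{ij}=x_i-x_j$ (possible since $\bar\partial\colon L_1\to C$ is a local epimorphism and $x_i-x_j\in C(U_{ij})$). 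The defining deviation $\alpha_{ij}\circ\alpha_{jk}=c_{ijk}\circ\alpha_{ik}$ is then the section
\begin{equation*}
  a_{ijk} = \ell_{ij} + \ell_{jk} - \ell_{ik} \in A(U_{ijk}),
\end{equation*}
which lands in $A=\ker\partial$ because its image under $\partial$ telescopes to $0$; this $2$-cocycle represents $[\cL_\beta]$.

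The crux is then a direct inspection. The sections $c_{ij}:=x_i-x_j$ constitute precisely the $C$-valued \v{C}ech $1$-cocycle representing $\delta_C(\beta)$, the $x_i$ being the local $p$-lifts of $\beta$; and lifting this cocycle along $\bar\partial$ to the $\ell_{ij}$ and forming its \v{C}ech coboundary is exactly the recipe for $\delta_A$. Hence $a_{ijk}$ represents $\delta_A(\delta_C(\beta)) = d^2(\beta)$. Since $\partial^2(\beta)$ is \emph{defined} in \eqref{doppio} to be $[\cL_\beta]$, this simultaneously gives $\partial^2 = d^2$ (part (i)) and the assertion that $d^2(\beta)$ is the class of the gerbe $\cL_\beta$ (part (ii)).

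I expect the main obstacle to be bookkeeping rather than any genuine difficulty. One must fix sign and orientation conventions so that the composition law of $\cL$, the chosen direction $x_j\to x_i$ of the morphisms, and the Baer-sum structure all align with the standard signs in $\delta_A$ and $\delta_C$; and one must phrase all liftings through local epimorphisms and successive refinements, valid over an arbitrary site $\C$ rather than honest open covers. Independence of the choices of $\{U_i\}$, the $x_i$ and the $\ell_{ij}$, together with additivity with respect to the Picard structure (needed to upgrade an equality of individual classes to an equality of homomorphisms, as already asserted after \eqref{doppio}), then follows from the usual coboundary arguments once the two recipes have been matched.
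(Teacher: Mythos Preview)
Your proposal is correct and follows essentially the same route as the paper: split \eqref{eq:6} via $C=\Im\partial$, choose local lifts $x_i$ of $\beta$ to $L_0$ and then $\ell_{ij}$ to $L_1$, and observe that the resulting $A$-valued $2$-cocycle $a_{ijk}=\ell_{ij}+\ell_{jk}-\ell_{ik}$ simultaneously computes $[\cL_\beta]$ and $\delta_A\circ\delta_C(\beta)$. The only difference is cosmetic: the paper routes the argument through the auxiliary gerbe of lifts $\cE_\beta$ from \S\ref{lifting} (showing $[\cE_\beta]=d^2(\beta)$ and then $\cE_\beta\simeq\cL_\beta$), whereas you compute $[\cL_\beta]$ directly from the groupoid model---a mild streamlining of the same \v{C}ech calculation.
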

\begin{proof}
  We keep the same notation as above. Let us split~\eqref{eq:6} as
  \begin{equation*}
    \begin{tikzcd}
      &&& 0 \arrow[d] & \\
      0 \arrow[r] & A \arrow[r, "\imath"] & L_1 \arrow[r, "\pi"] \arrow[dr, "\partial"'] & C \arrow[r] \arrow[d, "\jmath"] & 0 \\
      &          &            & L_0 \arrow[d, "p"] &  \\
      &          &            & B \arrow[d] & \\
      &          &            & 0 &
    \end{tikzcd}
  \end{equation*}
  with $C= \Im \partial$. By Grothendieck's theory of extensions~\cite{GrothendieckSGA7}, with $\beta \colon *\to B$, the fiber $(L_0)_\beta$ is a $C$-torsor (see the end of Remark~\ref{rem:2}). According to section~\ref{lifting}, we have a morphism $\tors (L_1)\to \tors (C)$, and the object $(L_0)_\beta$ of $\tors(C)$ gives rise to the gerbe of lifts  $\cE_\beta\equiv \cE_{L_0,\beta}$, which is an $A$-gerbe. Now, consider the map assigning to $\beta\in H^0(B)$ the class of $\cE_\beta\in H^2(A)$. By construction, this map factors through $H^1(C)$ by sending $\beta$ to the class of the torsor $(L_0)_\beta$. We then lift that to the class of the gerbe of lifts in $H^2(A)$. All stages are compatible with the abelian group structures. This is the homomorphism described in \cite[Théorème 3.4.2]{Giraud}.  

  It is straightforward that this is just the classical lift of $\beta$ through the four-term sequence~\eqref{eq:6}. Indeed, this is again easily seen in terms of a \v Cech cover $\{U_i\}$ of $*$. Lifts $x_i$ of $\beta\rvert_{U_i}$ are sections of the $C$-torsor $(L_0)_\beta$, therefore determining a standard $C$-valued 1-cocycle $\{c_{ij}\}$. From section~\ref{lifting} we then obtain an $A$-valued 2-cocycle $\{a_{ijk}\}$ arising from the choice of local $L_1$-torsors $X_i$ such that $X_i\to (L_0)_\beta \rvert_{U_i}$ is ($L_1\to C$)-equivariant. Note that in the case at hand, $\pi\colon L_1\to C$ being an epimorphism, the lifting of the torsor $(L_0)_\beta$ is done by choosing local trivializations, \ie the $x_i$ above, and then choosing $X_i=L_1\rvert_{U_i}$.

  The same argument shows that the class of $\cL_\beta$, introduced earlier, is the same as that of $\cE_\beta$. This follows from the following well known facts: objects of $\cL_\beta$ are locally lifts of $\beta$ to $L_0$; morphisms between them are given by elements of $L_1$ acting through $\partial$. As a result, automorphisms are sections of $A$ and clearly the class so obtained coincides with that of $\cE_\beta$.  Therefore $\cE_\beta$ and $\cL_\beta$ are equivalent and the homomorphism of \cite[Théorème 3.4.2]{Giraud} is equal to~\eqref{doppio}, as required.
\end{proof}
From the proof of the above lemma, we obtain the following  two descriptions of the $A$-gerbe $\cL_{\beta}$. 
 \begin{corollary}\label{diversi} (i) For any four-term complex  \eqref{eq:6} and any generalized point $\beta$ of $B$, the fiber  $\cL_{\beta}$ is a gerbe. Explicitly, it is the stack associated with the prestack which attaches to $U$ the groupoid $\cL_{\beta}(U)$ whose objects are elements $g \in L_0(U)$ with $p(g) = \beta$ and morphisms between $g$ and $g'$ given by elements $h$ of $L_1(U)$ satisfying $\partial(h) = g-g'$.
 
 (ii)  The $A$-gerbe $\cL_{\beta}$ is the lifting gerbe of the $C$-torsor $(L_0)_{\beta}$ to a $L_1$-torsor. \qed
\end{corollary}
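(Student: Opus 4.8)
The plan is to read both descriptions off directly from the construction of the Picard stack $\cL$ and from the analysis already carried out in the proof of Lemma~\ref{comparisons}; the only genuine work is to unwind definitions and to check the gerbe axioms against the exactness of~\eqref{eq:6}.

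For part (i) I would start from the standard explicit model of the strictly commutative Picard stack $\cL$ attached to $[L_1 \overset{\partial}{\too} L_0]$: it is the stack associated to the prestack whose objects over $U$ are the sections $g\in L_0(U)$ and whose morphisms $g\to g'$ are the sections $h\in L_1(U)$ with $\partial h = g-g'$, composition being addition in $L_1$. Since $p\circ\partial = 0$, the projection $p\colon\cL\to B$ carries every morphism to an identity, so forming the fiber over the generalized point $\beta\colon *\to B$ (a $2$-fiber product over the discrete object $B$) merely imposes the condition $p(g)=\beta$ on objects and leaves morphisms untouched. As $B$ is discrete, this fiber of the stack $\cL$ is the stack associated to the evident fiber of the defining prestack, which is precisely the prestack in the statement. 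It then remains to reconfirm the gerbe axioms from this explicit model: local non-emptiness holds because $p$ is an epimorphism, so $\beta$ lifts locally to some $g\in L_0$; local connectedness holds because two local lifts $g,g'$ satisfy $p(g-g')=0$, hence $g-g'\in\Im\partial=C$, and since $L_1\to C$ is an epimorphism there is, locally, an $h\in L_1$ with $\partial h=g-g'$, \ie a morphism $g\to g'$; and the automorphisms of any $g$ are the $h$ with $\partial h=0$, namely $\ker\partial = A$, so the band is $A$.

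For part (ii) I would reuse the splitting of~\eqref{eq:6} into the central extension $0\to A\to L_1\xrightarrow{\pi}C\to 0$ and the sequence $0\to C\to L_0\xrightarrow{p}B\to 0$ with $C=\Im\partial$. By Grothendieck's theory of extensions (the end of Remark~\ref{rem:2}) the fiber $(L_0)_\beta$ is a $C$-torsor, and \S\ref{lifting} applied to the central extension produces its $A$-gerbe of lifts $\cE_\beta$ along $\pi$. To identify $\cL_\beta$ with $\cE_\beta$ I would compare the two local pictures. A local lift $g$ of $\beta$ to $L_0$ is the same datum as a local section of the $C$-torsor $(L_0)_\beta$ and trivializes it; since $\pi$ is an epimorphism one may take $Q=L_1\rvert_U$ with the $\pi$-equivariant map $\lambda_g(h)=g+\partial h$ as the object of $\cE_\beta$ corresponding to $g$. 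A short check shows that a torsor isomorphism, necessarily translation by some $h_0\in L_1$, is compatible with $\lambda_g$ and $\lambda_{g'}$ exactly when $\partial h_0=g-g'$, so morphisms and automorphisms on the two sides match verbatim; this is an equivalence of the underlying prestacks, hence of their stackifications.

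The routine part is the definition-chasing; the step that deserves genuine care is the equivalence in (ii), where I must verify that the dictionary ``lift of $\beta$ to $L_0$'' $\leftrightarrow$ ``trivial $L_1$-torsor with its $\pi$-equivariant map to $(L_0)_\beta$'' respects restriction and composition, and that a general object $(Q,\lambda)$ of $\cE_\beta$ is, by the surjectivity of $\pi$, locally of this form, so that the comparison is a genuine equivalence of stacks rather than merely a fiberwise bijection on objects. Since this compatibility is exactly what the final paragraph of the proof of Lemma~\ref{comparisons} records, the corollary follows.
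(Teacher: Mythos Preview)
Your proposal is correct and follows exactly the route the paper intends: the corollary is stated with a \qed and is meant to be read off from the proof of Lemma~\ref{comparisons}, whose final paragraph records precisely the facts you unwind (objects of $\cL_\beta$ are local lifts of $\beta$ to $L_0$, morphisms are elements of $L_1$ acting through $\partial$, automorphisms are sections of $A$, and $\cL_\beta$ is equivalent to the lifting gerbe $\cE_\beta$). Your write-up is more explicit in verifying the gerbe axioms and in building the dictionary with $\cE_\beta$, but the approach is the same.
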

We will use both descriptions in \S \ref{sec:codim2} especially in the comparison of the Gersten and the Heisenberg gerbe of a codimension two cycle, in the case that it  is an intersection of divisors. 

A slightly different point of view is the following. Recast the sequence~\eqref{eq:6} as a quasi-isomorphism
\begin{equation*}
  A[2] \overset{\iso}{\too} \bigl[ L_1\too L_0\too B \bigr]
\end{equation*}
of three-term complexes of $\ch_+(\shAbC)$, where now $A$ has been shifted two places to the left. Also, relabel the right hand side as $L'_2\to L'_1\to L'_0$ (where again we employ homological degrees) for convenience. By \cite{Tatar}, the above morphism of complexes of $\ch_+(\shAbC)$, placed in degrees $[-2,0]$, gives an equivalence between the corresponding associated strictly commutative Picard 2-stacks
\begin{equation*}
  \fA \overset{\iso}{\too} \fL
\end{equation*}
over $\C$. Here $\fL = [L'_2\to L'_1\to L'_0]\sptilde$ and $\fA = [A\to 0\to 0]\sptilde \iso \tors (\cA)\iso \ger (A)$. This time we have $\pi_0(\fL) = \pi_1(\fL) = 0$, and $\pi_2(\fL)\iso A$, as it follows directly from the quasi-isomorphism above. Thus $\fL$ is 2-connected, namely any two objects are locally (\ie after base change) connected by an arrow; similarly, any two arrows with the same source and target are---again, locally---connected by a 2-arrow.

Locally, any object of $\fL$ is a section $\beta\in B=L'_0$. By the preceding argument, the Picard stack $\cL_\beta = \Aut_\fL(\beta)$ is an $A$-gerbe, and the assignment $\beta \mapsto \cL_\beta$ realizes (a quasi-inverse of) the equivalence between $\fA$ and $\fL$. It is easy to see that $\cL_\beta$ is the same as the fiber over $\beta$ introduced before.

In particular, for the Gersten resolution~\eqref{gersten}, \eqref{eq:9}, for $\cK_2$,
we get the equivalence of Picard 2-stacks
\begin{equation}
  \label{eq:8}
  \ger (\cK_2) \iso \bigl[G_2^X \bigr]\sptilde.
\end{equation}

\section{The Heisenberg group}\label{sec:heis}

The purpose of this section is to describe a functor $H\colon \Ab\times \Ab\to \Grp$, where $\Ab$ is the category of abelian groups and $\Grp$ that of groups.
If $\C$ is a site, the method immediately generalizes to the categories of abelian groups and of groups in $\shC$, the topos of sheaves on $\C$. For any pair $A, B$ of abelian sheaves on $\C$, there is a canonical Heisenberg sheaf $H_{A,B}$ (of non-commutative groups on $\C$), a central extension of $A \times B$ by $A\otimes B$.

The definition of $H$ is based on a generalization of the Heisenberg group construction due to Brylinski \cite[\S 5]{Brylinski}. A  pullback along the diagonal map $A \to A\otimes A$ gives the extension constructed by Poonen and Rains \cite{PoonenRains}.

\subsection{The Heisenberg group}
\label{sec:elem}

Let $A$ and $B$ be abelian groups. Consider the (central) extension
\begin{equation}
  \label{eq:1}
  0 \to A\otimes B \to H_{A,B} \to A\times B \to 0
\end{equation}
where the group $H_{A,B}$ is defined by the group law:
\begin{equation}
  \label{eq:2}
  (a,b,t)\, (a',b',t') = (aa',bb',t + t' + a\otimes b').
\end{equation}
Here $a,a'$ are elements of $A$, $b,b'$ of $B$, and $t,t'$ of $A\otimes B$. The nonabelian group $H_{A,B}$ is evidently a functor of the pair $(A,B)$, namely a pair of homomorphisms $(f\colon A\to A', g\colon B\to B')$ induces a homomorphism $H_{f,g}\colon H_{A,B}\to H_{A',B'}$. The special case $A=B=\mmu_n$ occurs in Brylinski's treatment of the regulator map to \'etale cohomology \cite{Brylinski}.

The map
\begin{equation}
  \label{eq:3}
  f \colon (A\times B)\times (A\times B) \too A\otimes B,
  \qquad
  f(a,b,a',b') = a\otimes b',
\end{equation}
is a cocycle representing the class of the extension~\eqref{eq:1} in $H^2(A\times B, A\otimes B)$ (group cohomology).  Its alternation
\begin{equation*}
  \varphi_f\colon \wedge_\bbZ^2 (A\times B) \too A\otimes B,
  \qquad
  \varphi_f((a,b),(a',b')) = a\otimes b' - a'\otimes b,
\end{equation*}
coincides with the standard commutator map and represents the value of the projection of the class of $f$ under the third map in the universal coefficient sequence
\begin{equation*}
  0 \too \Ext^1(A\times B, A\otimes B) \too
  H^2(A\times B, A\otimes B) \too
  \Hom(\wedge_\bbZ^2 (A\times B), A\otimes B).
\end{equation*}
As for the commutator map, it is equal to $[s,s]\colon \wedge_\bbZ^2 (A\times B)\to A\otimes B$, where $s\colon A\times B\to H_{A,B}$ is a set-theoretic lift, but the map actually is independent of the choice of $s$. (For details see, \eg the introduction to \cite{Breen99}.)
\begin{remark}
  The properties of the class of the extension $H_{A,B}$, in particular that it is a cup-product of the fundamental classes of $A$ and $B$, as we can already evince from~\eqref{eq:3}, are best expressed in terms of Eilenberg-Mac~Lane spaces. We will do this below working in the topos of sheaves over a site.
\end{remark}
\subsection{Extension to sheaves}
\label{sec:sh} 

The construction of the Heisenberg group carries over to the sheaf context. Let $\C$ be a site, and $\shC$ the topos of sheaves over $\C$. Denote by $\shAbC$ the abelian group objects of $\shC$, namely the abelian sheaves on $\C$, and by $\shGrpC$ the sheaves of groups on $\C$.

For all pairs of objects $A,B$ of $\shAbC$, it is clear that the above construction of $H_{A,B}$ carries over to a functor
\begin{equation*}
  H\colon \shAbC\times \shAbC \too \shGrpC.
\end{equation*}
In particular, since $H_{A,B}$ is already a sheaf of sets (isomorphic to $A \times B \times (A\otimes B)$), the only question is whether the group law varies nicely, but this is clear from its functoriality. Note further that by definition of $H_{A,B}$ the resulting epimorphism $H_{A,B}\to A\times B$ has a global section $s\colon A\times B\to H_{A,B}$ as objects of $\shC$, namely $s=(\id_A,\id_B,0)$, which we can use this to repeat the calculations of \S~\ref{sec:elem}.

In more detail, from \S~\ref{lifting}, the class of the central extenson~\eqref{eq:1} is to be found in
$H^2(\B_{A\times B},A\otimes B)$ ($A\otimes B$ is a trivial $A\times B$-module). This replaces the group cohomology of \S~\ref{sec:elem} with its appropriate topos equivalent. By pulling back to the ambient topos, say $\X=\shC$, this is the class of the gerbe of lifts from $B_{A\times B}$ to $B_H$. We are ready to give a proof of Theorem~\ref{primo}. This proof is computational.

\begin{proof}[Proof of Theorem~\ref{primo}]
  Let us go back to the cocycle calculations at the end of \S~\ref{lifting}, where $X$ is an object of $\C$ equipped with a cover $\cU=\lbrace U_i\rbrace$. An $A\times B$-torsor $(P,Q)$ over $X$ would be represented by a \v{C}ech cocycle $(a_{ij},b_{ij})$ relative to $\cU$. The cocycle is determined by the choice of isomorphisms $(P,Q)\rvert_{U_i}\iso (A\times B)\rvert_{U_i}$. Now, define $R_i = H_{A,B}\rvert_{U_i}$ with the trivial $H_{A,B}$-torsor structure, and let $\lambda_i\colon R_i\to (P,Q)\rvert_{U_i}$ equal the epimorphism in~\eqref{eq:1}. Carrying out the calculation described at the end of~\ref{lifting} with these data gives $\alpha_{ij}\circ \alpha_{jk}\circ \alpha_{ik}^{-1}=a_{ij}\otimes b_{jk}$, which is the cup-product in \v Cech cohomology of the classes corresponding to the $A$-torsor $P$ and the $B$-torsor $Q$. In other words, the gerbe of lifts corresponding to the central extension determined by the Heisenberg group incarnates the cup product map
  \begin{equation*}
    H^1(X,A) \times H^1(X,B) \overset{\cup}{\too} H^2(X,A\otimes B).
  \end{equation*}
  For the choice $\alpha_{ij} = (a_{ij}, b_{ij}, 0)$, one has the following explicit calculation in the Heisenberg group
  \begin{align*}
    \alpha_{ij}\circ \alpha_{jk}\circ \alpha_{ik}^{-1}
    & = (a_{ij}, b_{ij}, 0)(a_{jk}, b_{jk}, 0)(a_{ik}, b_{ik}, 0)^{-1}\\
    & = (a_{ik}, b_{ik}, a_{ij}\otimes b_{jk}) (a^{-1}_{ik}, b^{-1}_{ik}, a_{ik}\otimes b_{ik})\\
    & = (1,1, a_{ij} \otimes b_{jk} + a_{ik}\otimes b_{ik} - a_{ik} \otimes b_{ik}) \\
    & = (1,1, a_{ij} \otimes b_{jk});
  \end{align*}
  We used that the inverse of $(a, b, t)$ in the Heisenberg group is $(a^{-1}, b^{-1}, -t + a\otimes b)$:
  \begin{equation*}
    (a, b, t) (a^{-1}, b^{-1}, -t + a\otimes b)
    = (1, 1, a\otimes b^{-1} + t -t + a\otimes b) = (1,1,0).
  \end{equation*}
  It is well known \cite[Chapter 1, \S 1.3, Equation (1-18), p.~29]{Brylinskib} that
the \v Cech cup-product of $a =\{a_{ij}\}$ and $b = \{b_{ij}\}$ is given by the two-cocycle
\begin{equation*}
  \{a\cup b\}_{ijk} = \{a_{ij}\otimes b_{jk}\}.
\end{equation*}
 
  This proves the first three points of the statement, whereas the fourth is built-in from the very construction. The fifth follows from the fact that the class of the gerbe of lifts is bilinear: this is evident from the expression computed above.
\end{proof}

As hinted above, the cup product has a more intrinsic explanation in terms of maps between Eilenberg-Mac~Lane objects in the topos. Passing to Eilenberg-Mac Lane objects in particular ``explains'' why the cup-product realizes the cup-product pairing. First, we state
\begin{theorem}
  \label{thm:cup}
  The class of the extension~\eqref{eq:1} in $\shC$ corresponds to (the homotopy class of) the cup product map
  \begin{equation*}
    K(A\times B,1) \iso K(A,1)\times K(B,1) \too K(A\otimes B,2)
  \end{equation*}
  between the identity maps of $K(A,1)$ and $K(B,1)$; its expression is given by~\eqref{eq:3}.
\end{theorem}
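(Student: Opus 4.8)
The plan is to interpret both sides of the claimed correspondence as elements of the single group $H^2(K(A\times B,1),A\otimes B) = [K(A\times B,1), K(A\otimes B,2)]$ and to check that they are represented by the same normalized $2$-cocycle. I would work throughout in $\shC$ (or the associated category of simplicial sheaves), where for any abelian sheaf $C$ and any object $\X$ one has the fundamental identification $H^n(\X,C)\iso[\X,K(C,n)]$ between sheaf cohomology and homotopy classes of maps into the Eilenberg-Mac~Lane object $K(C,n)$. Under this dictionary the characteristic class of the central extension \eqref{eq:1}, which by \S\ref{lifting} lives in $H^2(\B_{A\times B},A\otimes B) = H^2(K(A\times B,1),A\otimes B)$, is precisely a homotopy class of maps $K(A\times B,1)\to K(A\otimes B,2)$; the content of the theorem is to name it.

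First I would record the two inputs. On the source side, the nerve construction gives $K(A\times B,1)\iso K(A,1)\times K(B,1)$, since $\B_{A\times B}\iso\B_A\times\B_B$; the two projections are the fundamental classes $\iota_A\in H^1(K(A,1),A)$ and $\iota_B\in H^1(K(B,1),B)$, each represented by the identity map of the corresponding Eilenberg-Mac~Lane object. On the target side I would invoke the universal external cup-product pairing $K(A,1)\times K(B,1)\to K(A\otimes B,2)$, whose composite with the identities is by definition the map $\iota_A\cup\iota_B$ appearing in the statement. Both the extension class and $\iota_A\cup\iota_B$ are thereby exhibited as elements of $H^2(K(A\times B,1),A\otimes B)$.

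The heart of the argument is a cocycle comparison in the internal bar complex computing the cohomology of $\B_G=K(G,1)$ for $G=A\times B$, that is, the sheaf-internal group cohomology $H^*(A\times B,A\otimes B)$. By \S\ref{sec:elem} the extension class is represented by the $2$-cocycle $f$ of \eqref{eq:3}, namely $f((a,b),(a',b'))=a\otimes b'$. On the other hand, pulling $\iota_A$ and $\iota_B$ back along the two projections yields the $1$-cochains $(a,b)\mapsto a$ and $(a,b)\mapsto b$, and the standard simplicial formula for the cup-product of two $1$-cocycles $u,v$ with trivial coefficients paired via $\otimes$ reads $(u\cup v)((a,b),(a',b'))=u(a,b)\otimes v(a',b')=a\otimes b'$. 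The two cocycles coincide on the nose, so the two classes in $H^2(K(A\times B,1),A\otimes B)$ agree, which is the assertion; and the common expression is exactly \eqref{eq:3}.

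The main obstacle is not this final comparison---once a common combinatorial model is fixed, both cocycles are visibly $a\otimes b'$---but the justification that the model computes the right thing over a general site. Concretely, I must ensure that the objects $K(C,n)$ exist in $\shC$ with $H^n(\X,C)\iso[\X,K(C,n)]$, that the internal bar complex of $\B_G$ computes $H^*(\B_G,-)$, and that the universal cup-product map reproduces the usual cochain-level formula in this sheaf-theoretic setting. These are standard facts about simplicial sheaves and Eilenberg-Mac~Lane objects, but they are precisely what makes the statement striking: the naive pointwise cocycle \eqref{eq:3} continues to represent the cup-product map over an arbitrary topos.
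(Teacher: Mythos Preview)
Your proposal is correct and follows essentially the same approach as the paper: both arguments place the extension class and the cup-product map in $H^2(K(A\times B,1),A\otimes B)$ and then verify that each is represented by the cocycle $((a,b),(a',b'))\mapsto a\otimes b'$. The technical justifications you flag as the ``main obstacle'' are precisely what the paper supplies in its Propositions~\ref{simpltop} (the identification $H^i(\B_G,A)\iso\HH^i(BG,A)$ via Illusie's total-topos spectral sequence) and~\ref{prop:cup} (the explicit Alexander--Whitney computation of $\delta_{1,1}$ at level~$2$).
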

\begin{proof}
  Observe the epimorphism $H_{A,B}\to A\times B$ has global set-theoretic sections. The statement follows from Propositions~\ref{simpltop} and~\ref{prop:cup} below.
\end{proof}
The two main points, which we now proceed to illustrate, are that Eilenberg-Mac~Lane objects represent cohomology (and hypercohomology, once we take into account simplicial objects) in a topos, and that the cohomology of a group object in a topos (such as $A\times B$ in $\shC$) with trivial coefficients can be traded for the hypercohomology of a simplicial model of it. In this way we calculate the class of the extension as a map, and such map is identified with the cup product. We assemble the necessary results to flesh out the proof of Theorem~\ref{thm:cup} in the next two sections.

\subsection{Simplicial computations}
\label{sec:simplicial}
The class of the central extension~\eqref{eq:7} can be computed simplicially. (For the following recollections, see \cite[VI.5, VI.6, VI.8]{IllusieII} and \cite[\S 2]{Breen}.)

Let $\T$ be a topos, $G$ a group-object of $\T$ (for us it will be $\T=\shC$) and $BG=K(G,1)$ the standard classifying simplicial object with $B_nG=G^n$~\cite{DeligneH3}. Let $A$ be a trivial $G$-module. We will need the following well known fact.\footnote{Unfortunately we could not find a specific entry point in the literature to reference, therefore we assemble here the necessary prerequisites. See also \cite[\S\S 2,3]{chinburg} for a detailed treatment in the representable case.}
\begin{proposition}
  \label{simpltop}
  \begin{math}H^i(\B_G,A) \iso \HH^i(BG,A).
  \end{math}
\end{proposition}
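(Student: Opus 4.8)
The plan is to prove the isomorphism by exhibiting a single double complex that simultaneously computes both sides. On the left, $H^i(\B_G,A)=\Ext^i_{\B_G}(\bbZ,A)$ is the derived functor of global sections on the classifying topos, with $A$ regarded as an abelian object of $\B_G$ carrying the trivial $G$-action. On the right, $\HH^i(BG,A)$ is by definition the abutment of the first hypercohomology spectral sequence of the simplicial object $BG$, namely $E_1^{p,q}=H^q(B_pG,A)=H^q(G^p,A)\Rightarrow \HH^{p+q}(BG,A)$, with $d_1$ the alternating sum of the face operators of $BG$. I would produce the matching spectral sequence on the left from the homogeneous bar resolution and then identify the two double complexes term by term.

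The key device is the canonical geometric morphism $u\colon \shC\to \B_G$ whose inverse image $u^*$ is the forgetful functor. It sits in a triple of adjoints $u_!\dashv u^*\dashv u_*$, where $u_! M=\bbZ[G]\otimes M$ is the free (induced) $G$-module. Two formal properties drive everything: $u^*$ is faithful, exact and reflects exactness (so exactness in $\B_G$ may be tested after forgetting the action), and $u_!$ is exact (since $\bbZ[G]$ is $\bbZ$-flat), whence $u^*$ preserves injectives. Now resolve $\bbZ$ by the homogeneous bar complex $L_\bullet\to \bbZ$, $L_p=\bbZ[G^{p+1}]\iso u_!\bbZ[G^p]$: it is a resolution because it is contractible as a complex of abelian sheaves via the extra degeneracy, and exactness transfers to $\B_G$ through $u^*$. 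Choosing an injective resolution $A\to I^\bullet$ in $\B_G$ and using $u_!\dashv u^*$ together with the fact that $u^*I^\bullet$ is then an injective resolution of $A$ in $\shC$, one computes $\Ext^q_{\B_G}(L_p,A)\iso \Ext^q_{\shAbC}(\bbZ[G^p],A)\iso H^q(G^p,A)=H^q(B_pG,A)$. This is the promised $E_1$-page.

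To conclude I would avoid comparing the two spectral sequences abstractly and instead identify the underlying double complexes on the nose. The first filtration of $\Hom_{\B_G}(L_\bullet,I^\bullet)$ gives the $E_1$-page above and converges to $H^*(\B_G,A)$; but under the adjunction each term $\Hom_{\B_G}(L_p,I^q)\iso \Hom_{\shC}(\bbZ[G^p],u^*I^q)=\Gamma(G^p,u^*I^q)$ is exactly the $(p,q)$-term of the double complex whose totalization defines $\HH^*(BG,A)$, the two bar differentials agreeing under the standard homogeneous/inhomogeneous dictionary $L_p=\bbZ[G^{p+1}]\leftrightarrow B_pG=G^p$. Hence the total complexes coincide and the abutments are canonically isomorphic. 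The main obstacle is the homological bookkeeping at the two structural inputs: verifying that $u_!$ is exact so that $u^*$ preserves injectives, and matching the face operators of the bar resolution with those of $BG$ so that the differentials---not merely the $E_1$-terms---agree. Conceptually this is the statement that the canonical surjection $u\colon \shC\to \B_G$ is of cohomological descent with \v{C}ech nerve $BG$, which is the packaging used in the cited sources.
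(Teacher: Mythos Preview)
Your argument is correct and complete: the adjunction $u_!\dashv u^*$ with $u_!$ exact (by flatness of $\bbZ[G]$) does give that $u^*$ preserves injectives, the bar complex is exact in $\B_G$ because $u^*$ is conservative exact, and the inhomogeneous identification $L_p\iso u_!\bbZ[G^p]$ makes the adjunction carry the simplicial differentials of $L_\bullet$ precisely to those of $BG$, so the two double complexes literally coincide.

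This is, however, a genuinely different route from the paper's. The paper does not resolve $\bbZ$ in $\B_G$ at all; instead it invokes Illusie's machinery of simplicial topoi: one forms the simplicial topos $\X=\T/BG$, identifies the classifying topos $\B_G$ with the topos $\B\X$ of descent data over $\X$, and then quotes the comparison $R\Gamma(\B\X,L)\iso R\Gamma(\Tot(\X),\mathit{ner}(L))$ from \cite[VI.8]{IllusieII}. This yields the spectral sequence $E_1^{p,q}=H^q(G^p,A)\Rightarrow H^\bullet(\B_G,A)$ by a general descent theorem, which is then matched with the defining spectral sequence for $\HH^\bullet(BG,A)$. Your approach is more elementary and self-contained---it needs only the bar resolution and a single adjunction, and it identifies the double complexes on the nose rather than comparing spectral sequences. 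The paper's approach buys generality (the same formalism handles arbitrary simplicial objects and nontrivial coefficients) and situates the statement inside the cohomological descent framework you allude to in your final sentence, at the cost of importing heavier infrastructure. Either is perfectly acceptable here.
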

\begin{proof}
The object on the right is the hypercohomology as a simplicial object of $\T$. Let $X$ be a simplicial object in a topos $\T$. One defines
\begin{equation*}
  \HH^i(X,A) = \EExt^i(\bbZ[X]\sptilde, A)
\end{equation*}
where $M\sptilde$, for any simplicial abelian object $M$ of $\T$, denotes the corresponding chain complex defined by $M_n\sptilde=M_n$, and by taking the alternate sum of the face maps. $\bbZ X_n$ denotes the abelian object of $\T$ generated by $X_n$. Of interest to us is the spectral sequence \cite[Example (2.10) and below]{Breen}:
\begin{equation*}
  E_1^{p,q} = H^q(X_p,A) \Longrightarrow \HH^\bullet(X, A).
\end{equation*}

Let $X$ be any simplicial object of $\T$. The levelwise topoi $\T/X_n$, $n=0,1,\dots$, form a simplicial topos $\X=\T/X$ or equivalently a topos fibered over $\Delta^\op$, where $\Delta$ is the simplicial category. The topos $\B\X$ of $\X$-objects essentially consists of descent-like data, that is, objects $L$ of $\X_0$ equipped with an arrow $a\colon d_1^*L\to d_0^*L$ the cocycle condition $d_0^*a\, d_2^*a=d_1^*a$ and $s_0^*a = \id$ (the latter is automatic if $a$ is an isomorphism).
By~\cite[VI.8.1.3]{IllusieII}, in the case where $X=BG$, $\B\X$ is nothing but $\B_G$, the topos of $G$-objects of $\T$. One also forms the topos $\Tot(\X)$, whose objects are collections $F_n\in \X_n$ such that for each $\alpha\colon [m]\to [n]$ in $\Delta^\op$ there is a morphism $F_\alpha\colon \alpha^*F_m\to F_n$, where $\alpha^*$ is the inverse image corresponding to the morphism $\alpha\colon \X_n\to \X_m$. There is a functor $\mathit{ner}\colon \B\X\to \Tot(\X)$ sending $(L,a)$ to the object of $\Tot(\X)$ which at level $n$ equals $(d_0\cdots d_0)^*L$ ($a$ enters through the resulting face maps), see \loccit for the actual expressions. The functor $\mathit{ner}$ is the inverse image functor for a morphism $\Tot(\X)\to \B\X$, and, $\X$ satisfying the conditions of being a ``good pseudo-category'' (\cite[VI 8.2]{IllusieII}) we have an isomorphism
\begin{equation*}
  R\Gamma(\B\X,L) \overset{\iso}{\too} R\Gamma (\Tot(\X),\mathit{ner}(L))
\end{equation*}
and, in turn, a spectral sequence
\begin{equation*}
  E_1^{p,q} = H^q(X_p,\mathit{ner}_p(L)) \Longrightarrow H^\bullet(\B\X, L),
\end{equation*}
\cite[VI, Corollaire 8.4.2.2]{IllusieII}. On the left hand side we recognize the spectral sequence for the cohomology of a simplicial object in a topos \cite[\S 2.10]{Breen}.  

Applying the foregoing to $X=BG$, and $L$ a left $G$-object of $\T$, we obtain \cite[VI.8.4.4.5]{IllusieII}
\begin{equation*}
  E_1^{p,q} = H^q(G^p,L) \Longrightarrow H^\bullet(\B_G, L).
\end{equation*}
(We set $Y=e$, the terminal object of $\T$, in the formulas from \loccit) 

Thus if $L=A$, the trivial $G$-module arising from a central extension of $G$ by $A$, by comparing the spectral sequences we can trade $H^2(\B_G, A)$ for the hypercohomology $\HH^2(K(G,1),A)$. 
\end{proof}

\subsection{The cup product}
\label{sec:cup}
The class of the extension extension~\eqref{eq:1} corresponds to the homotopy class of a map $K(A\times B,1)\to K(A\otimes B,2)$.  We interpret it in terms of cup products of Eilenberg-Mac~Lane objects. 

Recall that  for an object $M$ of $\shAbC$ we have $K(M,i)=K(M[i])$, where $M[i]$ denotes $M$ placed in homological degree $i$, and $K \colon \ch_+(\shAbC)\to s\shAbC$ is the Dold-Kan functor from nonnegative chain complexes of $\shAbC$ to simplicial abelian sheaves. Explicitly:
  \begin{equation*}
    K(M,i)_n =
    \begin{cases}
      0 & 0\leq n < i, \\
      \bigoplus_{s \colon [n] \twoheadrightarrow [i]} M & n\geq i.
    \end{cases}
  \end{equation*}
  In particular, $K(M,i)_i=M$. $K$ is a quasi-inverse to the normalized complex functor $N\colon s\shAbC\to \ch_+(\shAbC)$.

  If $X$ is a simplicial object $X$ of $\shC$, we have
  \begin{equation}
    \label{eq:5}
  \HH^i(X,M) \iso [X,K(M,i)],
\end{equation}
where the right-hand side denotes the hom-set in the homotopy category \cite{Illusie,Breen}. In particular, there is a  fundamental class $\imath_M^n\in \HH^n(K(M,n),M)$, corresponding to the identity map.

Returning to the objects $A$ and $B$ of $\shAbC$, also recall the morphism \cite[Chapter II, Equation (2.22), p.~64]{Breen}
\begin{equation}
  \label{eq:4}
  \delta_{i,j}\colon
  K(A,i)\times K(B,j) \too K(A\otimes B,i+j).
\end{equation}
It is the composition of two maps. The first is:
\begin{equation*}
  K(A,i)\times K(B,j)\too d((K(A,i)\boxtimes K(B,j)) = (K(A,i)\otimes K(B,j))),
\end{equation*}
where $\boxtimes$  denotes the external tensor product of simplicial objects of $\shAbC$ and $d$ the diagonal; the second is the map in $s\shAbC$ corresponding to the Alexander-Whitney map under the Dold-Kan correspondence. We have:
\begin{proposition}
  \label{prop:cup}
  The class of the extension~\eqref{eq:1} is equal to $\imath_A^1\otimes \imath_B^1 = \delta_{1,1}(\imath_A^1\times \imath_B^1)$.
\end{proposition}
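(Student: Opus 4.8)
The plan is to show that both sides of the asserted equality are represented by the single explicit $2$-cocycle~\eqref{eq:3}. By \S\ref{sec:sh} the class of the extension~\eqref{eq:1} lives in $H^2(\B_{A\times B},A\otimes B)$, which Proposition~\ref{simpltop} together with~\eqref{eq:5} identifies with $\HH^2(K(A\times B,1),A\otimes B)\iso[K(A\times B,1),K(A\otimes B,2)]$. Because the epimorphism $H_{A,B}\to A\times B$ carries the global set-theoretic section $s=(\id_A,\id_B,0)$ noted in \S\ref{sec:sh}, the class of the extension receives no correction from the rows $q>0$ of the spectral sequence $E_1^{p,q}=H^q((A\times B)^p,A\otimes B)$ of Proposition~\ref{simpltop}: it is represented in the row $q=0$ by the normalized group $2$-cocycle~\eqref{eq:3}, namely $f\big((a,b),(a',b')\big)=a\otimes b'$. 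Recording this identifies the left-hand side.

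Next I would unwind the right-hand side. Since $\imath_A^1$ and $\imath_B^1$ are the fundamental classes, i.e.\ the identity maps of $K(A,1)$ and $K(B,1)$, the external product $\imath_A^1\times\imath_B^1$ is the identity of $K(A,1)\times K(B,1)\iso K(A\times B,1)$; hence $\delta_{1,1}(\imath_A^1\times\imath_B^1)$ is just the class of the cup-product map $\delta_{1,1}$ itself, read through this identification. At the level of normalized cochains the two fundamental classes become the two projections $\mathrm{pr}_A\colon A\times B\to A$ and $\mathrm{pr}_B\colon A\times B\to B$, viewed as $1$-cochains on the bar construction $K(A\times B,1)$, where a $2$-simplex is written $[g_1\mid g_2]$ with $g_1=(a,b)$, $g_2=(a',b')$.

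The heart of the argument is then an explicit evaluation of $\delta_{1,1}$ on such a $2$-simplex via the Alexander--Whitney map. Recall from~\eqref{eq:4} that $\delta_{1,1}$ factors as the product-to-diagonal map $K(A,1)\times K(B,1)\to d\big(K(A,1)\boxtimes K(B,1)\big)$ followed by the map induced by Alexander--Whitney under Dold--Kan. On normalized complexes $N K(A,1)=A[1]$ and $N K(B,1)=B[1]$, so their tensor product is concentrated in degree $2$ and equals $(A\otimes B)[2]=NK(A\otimes B,2)$; consequently, of the Alexander--Whitney terms $\sum_{p+q=2}(\text{front }p\text{-face})\otimes(\text{back }q\text{-face})$ only the $(p,q)=(1,1)$ term survives, the $(2,0)$ and $(0,2)$ terms landing in degenerate or vanishing pieces. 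The front $1$-face is $d_2[g_1\mid g_2]=[g_1]$ and the back $1$-face is $d_0[g_1\mid g_2]=[g_2]$; applying $\mathrm{pr}_A$ to the first and $\mathrm{pr}_B$ to the second yields
\begin{equation*}
  (\imath_A^1\cup\imath_B^1)[g_1\mid g_2] = a\otimes b',
\end{equation*}
which is exactly the cocycle~\eqref{eq:3}, so the two classes coincide.

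The main obstacle, and the only place requiring genuine care, is the bookkeeping in this last step: one must fix the conventions in the Dold--Kan correspondence and the Eilenberg--Zilber/Alexander--Whitney maps precisely enough to be sure that (a) it is the $(1,1)$ term and no other that contributes, (b) the front and back faces pick out the first and second bar entries in the stated order, so that one obtains $a\otimes b'$ rather than $a'\otimes b$ or a symmetrized expression, and (c) the normalization kills the remaining terms so that no coboundary correction is needed. Once the conventions are pinned down this is a short and purely formal computation reproducing~\eqref{eq:3} on the nose, which, together with the first two paragraphs, establishes the proposition.
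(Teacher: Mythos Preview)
Your proposal is correct and follows essentially the same route as the paper's own proof: both identify the extension class with the explicit cocycle~\eqref{eq:3} via the global section $s$, and both compute $\delta_{1,1}$ at level~$2$ by evaluating the Alexander--Whitney map and observing that only one term survives, yielding $((a,b),(a',b'))\mapsto a\otimes b'$. The paper phrases the AW computation in terms of the three summands $d_0^vd_0^v$, $d_1^hd_1^h$, $d_2^hd_0^v$ (with reference to Illusie), while you phrase it via front/back faces on the bar construction and normalization; these are the same computation in slightly different language, and your more explicit flagging of the convention-dependence is a reasonable addition.
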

\begin{proof}
  Observe that any simplicial morphism $f\colon X\to K(M,i)$ is determined by $f_i$, the rest, for $n>i$, being determined by the simplicial identities. Therefore we need to compute:
  \begin{equation*}
    K(A\times B,1)_2 \iso K(A,1)_2\times K(B,1)_2 \too K(A\otimes B,2)_2,
  \end{equation*}
  namely
  \begin{equation*}
    (A\times B)\times (A\times B) \too (A\times A)\times (B\times B) \too A\otimes B.
  \end{equation*}
  From the expression of the Alexander-Whitney map, in \eg\ \cite{Illusie}, the image of the second map in $\ch_+(\shAbC)$ is the sum of $d_0^vd_0^v$, $d_1^hd_1^h$, and $d_2^hd_0^v$. Only the third one is nonzero, giving $((a,b),(a',b'))\to a\otimes b'$, which equals $f$ in the construction of the extension~\eqref{eq:1}.  Using~\eqref{eq:5} we obtain the conclusion.
\end{proof}
The morphism~\eqref{eq:4} represents the standard cup product in cohomology. By Proposition ~\ref{prop:cup}, for an object $X$ of $s\shC$, the cup product
\begin{equation*}
  \HH^1(X,A)\times \HH^1(X,B) \too \HH^2(X,A\otimes B)
\end{equation*}
factors through $X\to K(A,1)\times K(B,1)$ and the extension~\eqref{eq:1}.
\begin{remark*}
  Proposition~\ref{prop:cup} and the above map provide a more conceptual proof of Theorem~\ref{primo}.
\end{remark*}

\section{Examples and connections to prior results}
\label{sec:Examples}
In this section, we collect some examples and briefly indicate the connections with earlier results \cite{Bloch, Brylinski, Parshin, PoonenRains, Ramakrishnan}.

\subsection{Self-cup products of Poonen-Rains}   In \cite{PoonenRains}, Poonen and Rains construct, for any abelian group $A$, a central extension of the form
  \begin{equation*}
    0 \to A\otimes A\to U\!A\to A \to 0,
  \end{equation*}
  providing a functor $U\colon \Ab\to \Grp$.  The group law in $U\!A$ is obtained from~\eqref{eq:2} by setting $a=a'$ and $b=b'$. Hence the above extension can be obtained from~\eqref{eq:1} by pulling back along the diagonal homomorphism $\Delta_A\colon A\to A\times A$. Similarly, both the cocycle and its alternation for the extension constructed in \loccit\ are obtained from ours by pullback along $\Delta_A$, for $A\in \Ab$. Similar remarks apply over an abelian sheaf $A$ on any site $\C$. They use  $U\!A$ to describe the self-cup product $\alpha \cup \alpha$ of any element $\alpha \in H^1(A)$.

\subsection{Brylinski's work on regulators and \'etale analogues} In \cite{Brylinski}, Brylinski has proved Theorem \ref{primo} in the case $A =B =\mmu_n$, the \'etale sheaf $\mmu_n$ of $n$\ts{th} roots of unity on a scheme $X$ over $\Spec\bbZ[\frac{1}{n}]$ using the Heisenberg group $H_{\mmu_n, \mmu_n}$ (in our notation). He used it to provide a geometric interpretation of the regulator map
\begin{equation*}
  c_{1,2}: H^1(X, \cK_2) \too H^3(X, \mmu_n^{\otimes 2}), \qquad  (\text{$n$ odd}),
\end{equation*}
a special case of C.~Soul\'e's regulator. If $X$ is a smooth projective variety over $\mathbb C$ (viewed as an complex analytic space) and $f,g$ are invertible functions on $X$, P.~Deligne (and Bloch) \cite{Deligne0} constructed a holomorphic line bundle $(f,g)$ on $X$ and Bloch showed that this gives a regulator map from $K_2(X)$ to the group of isomorphism classes of holomorphic line bundles with connection, later interpreted by D.~Ramakrishnan \cite{Ramakrishnan} in terms of the three-dimensional Heisenberg group. 

Write $[f]_n, [g]_n \in H^1(X, \mmu_n)$ for the images of $f,g$ under the boundary map $H^0(X, \cO_{X^{an}}) \to H^1(X, \mmu_n)$ of the analytic Kummer sequence
\begin{equation*}
  1\too \mmu_n \too \cO_{X^{an}}^* \xrightarrow{u\mapsto u^n} \cO_{X^{an}}^* \too 1.
\end{equation*}
The gerbe $G_{[f]_n,[g]_n}$ from Theorem \ref{primo} is compatible with Bloch-Deligne line bundle $(f,g)$, in a sense made precise in \cite[Proposition 5.1 and after]{Brylinski}.

\subsection{Finite flat group schemes} Let $X$ be any variety over a perfect field $F$ of characteristic $p>0$. For any commutative finite flat group scheme $N$ killed by $p^n$, consider the cup product pairing
\begin{equation*}
  H^1(X, N) \times H^1(X, N^D) \to H^2(C, \mmu_{p^n})
\end{equation*}
of flat cohomology groups where $N^D$ is the Cartier dual of $N$. Theorem \ref{primo} provides a $\mmu_{p^n}$-gerbe on $X$ given a $N$-torsor and a $N^D$-torsor.  When $N$ is the kernel of $p^n$ on an abelian scheme $A$ so that $N^D$ is the kernel of $p^n$ on the dual abelian scheme $A^D$ of $A$, the cup-product pairing is related to the N\'eron-Tate pairing \cite[p.~19]{Milne67}.

 \subsection{The gerbe associated with a pair of divisors}\label{pairs} Let $X$ be a smooth variety over a field $F$. Let $D$ and $D'$ be divisors on $X$. Consider the non-abelian sheaf $H$ on $X$ 
obtained by pushing the Heisenberg group $H_{\cK_1, \cK_1}$ along the multiplication map $m: \cK_1\otimes \cK_1 \to \cK_2$. 
So $H$ is a central extension of $\cK_1 \times K_1$ by $\cK_2$ which we write
\begin{equation*}
  0 \too \cK_2 \too H \overset{\pi}{\too} \cK_1 \times \cK_1 \too 0.
\end{equation*}

 Let $L = L_{D, D'}$ denote the $\cK_1 \times \cK_1$-torsor defined by the pair $D, D'$. Applying Theorem \ref{primo} gives a $\cK_2$-gerbe on $X$ as follows. Since $H$ is a central extension (so $\cK_1 \times \cK_1$ 
 acts trivially on $\cK_2$), the category of local liftings of $L$ to a $\cK_2$-torsor provide (\S \ref{lifting}, \cite[IV, 4.2.2]{Giraud}) a canonical $\cK_2$-gerbe $\cG_{D, D'}$. 
 \begin{definition}
   The Heisenberg gerbe $\cG_{D, D'}$ with band $\cK_2$ is the following: For each open set $U$, the category $\cG_{D, D'}(U)$ has objects pairs $(P, \rho)$ where $P$ is a $H$-torsor on $U$ and
   \begin{equation*}
     \rho\colon P \times_{\pi} (\cK_1 \times \cK_1) \overset{\sim}{\too} L
   \end{equation*}
   is an isomorphism of $\cK_1 \times \cK_1$-torsors; a morphism from $(P, \rho)$ to 
$(P', \rho')$ is a map $f: P \to P'$ of $H$-torsors satisfying $\rho = \rho'\circ f$. It is clear that the set of morphisms from $(P, \rho)$ to $(P', \rho')$ is a $\cK_2$-torsor.
\end{definition} 

\begin{example} Assume $X$ is a curve (smooth proper) and put $Y =X \times X$.

(i) Assume $F=\mathbb F_q$ is a finite field. Let $D$ be the graph on $Y$ of the Frobenius morphism $\pi: X \to X$ and $D'$ be the diagonal, the image of $X$ under the map $\Delta: X \to X \times X$. Theorem \ref{primo} attaches a $\cK_2$-gerbe on $Y$ to the zero-cycle $D.D'$, the intersection of the divisors $D$ and $D'$. Since the zero cycle $D.D'$ is the pushforward $\Delta_{*} \beta$ of  $\beta= \displaystyle\sum_{x\in X(\mathbb F_q)} x$ on $X$, we obtain that the set of rational points on $X$ determines a $\cK_2$-gerbe on $X \times X$. 

(ii) Note that the diagonal $\Delta_Y$ (a codimension-two cycle on $Y \times Y$) can be written as an intersection of divisors $V$ and $V'$ on $Y\times Y = X \times X\times X \times X$ where $V$ (resp. $V'$) are the set of points of the latter of the form  $\{(a,b,a,c)\}$ (resp. $\{(a,b,d,b)\}$).  Theorem \ref{primo} says that $\Delta_Y$ determines a $\cK_2$-gerbe on $Y\times Y$. 
\end{example}

\subsection{Adjunction formula} Let $X$ be a smooth proper variety and $D$ be a smooth divisor of $X$.  The classical adjunction formula states: 

 \textit{The restriction of the line bundle $L_D^{-1}$ to $D$ is the conormal bundle $N_D$ (a line bundle on $D$).} 
 
 Given a pair of smooth divisors $D, D'$ with $E = D \cap D'$ smooth of pure codimension two, write $\iota: E \hookrightarrow X$ for the inclusion. There is a map $\pi: \iota^*\cK_2 \to \cK_2^E$, where $\cK_2^E$ indicates the usual K-theory sheaf $\cK_2$ on $E$. 
 An analogue of the adjunction formula for $E$ would be a description of the $\cK_2^E$-gerbe $\pi_*  \iota^*\cG_{D,D'}$ obtained from the $\cK_2$-gerbe $\cG_{D, D'}$ on $X$.   
 \begin{proposition}
   Let $D$ and $D'$ be smooth divisors of $X$ with $E = D \cap D'$ smooth of pure codimension two. 
   Consider the line bundles $V = (N_D)|_E$ and $V'= (N_{D'})|_E$ on $E$. Then, $\pi_*  \iota^*\cG_{D, D'}$ is equivalent to the $\cK^E_2$-gerbe $\cG_{V, V'}$.
\end{proposition}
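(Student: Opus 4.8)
The plan is to deduce the formula from the naturality of the lifting-gerbe construction under the two operations $\iota^*$ and $\pi_*$, reducing everything to an identification of the restricted line bundles with the conormal bundles. Recall from \S\ref{pairs} that $\cG_{D,D'}$ is the gerbe of lifts of the $\cK_1\times\cK_1$-torsor $L_{D,D'}=\cO_D\times\cO_{D'}$ along the central extension $0\to\cK_2\to H\to\cK_1\times\cK_1\to 0$ obtained by pushing out $H_{\cK_1,\cK_1}$ along $m$, and that as a torsor $\cO_D$ is nothing but the line bundle $L_D=\cO_X(D)$. Because the inverse-image functor $\iota^{-1}$ is exact and monoidal, it carries this central extension to its restriction over $E$ and commutes with the formation of the gerbe of lifts; hence $\iota^*\cG_{D,D'}$ is canonically the Heisenberg gerbe on $E$ attached to the restricted pair $(\iota^*\cO_D,\iota^*\cO_{D'})$, now with band $\iota^*\cK_2$.

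Next I would push the band forward along $\pi\colon\iota^*\cK_2\to\cK_2^E$. The essential point is that $\pi$ is compatible with the multiplication maps, i.e.\ that the square formed by $\iota^*\cK_1\otimes\iota^*\cK_1\xrightarrow{m}\iota^*\cK_2$ and $\cK_1^E\otimes\cK_1^E\xrightarrow{m^E}\cK_2^E$ commutes; this is the naturality of the $K$-theory symbol product under restriction of the units sheaf $\iota^*\cK_1\to\cK_1^E$. Granting this, Theorem~\ref{primo} together with its functoriality (part~(iv)) identifies $\pi_*\iota^*\cG_{D,D'}$ with the Heisenberg $\cK_2^E$-gerbe $\cG_{\iota^*\cO_D,\,\iota^*\cO_{D'}}$ built from the two line bundles $\iota^*\cO_D$ and $\iota^*\cO_{D'}$ on $E$.

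It then remains to identify these restricted line bundles. By the classical adjunction formula $N_D=L_D^{-1}|_D$, so $\iota^*\cO_D=L_D|_E\iso(N_D|_E)^{-1}=V^{-1}$, and likewise $\iota^*\cO_{D'}\iso (V')^{-1}$; here the smoothness of $D$, $D'$, and $E$ is what guarantees that $N_D$, $N_{D'}$ and their restrictions are genuine line bundles. Thus $\pi_*\iota^*\cG_{D,D'}\iso\cG_{V^{-1},(V')^{-1}}$. Finally I would invoke the bilinearity of $(P,Q)\mapsto\cG_{P,Q}$ recorded in \eqref{bimon}: inverting either torsor argument replaces the gerbe by its inverse in the Picard $2$-category $\ger_E(\cK_2^E)$ (using part~(v) of Theorem~\ref{primo} to make the cross term $\cG_{V^{-1}\otimes V,\,\cdot}$ trivial), so inverting both arguments returns $\cG_{V,V'}$, which gives the desired equivalence.

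I expect the main obstacle to be making the first two steps fully rigorous, namely that $\pi_*\iota^*$ genuinely commutes with the lifting-gerbe construction. This splits into the exactness of $\iota^{-1}$, which is routine, and the band-level compatibility of the comparison map $\pi$ with the symbol product, which is the substantive technical input and relies on smoothness and purity of the codimension-two intersection $E$ to ensure that the restricted $K$-sheaves are the correct Gersten data on $E$. The normal-versus-conormal discrepancy, by contrast, is only a bookkeeping matter that is absorbed harmlessly by the bilinearity of the Heisenberg gerbe; one can sanity-check the whole argument on classes, where $c_1(V)\cup c_1(V')=\iota^*[D]\cup\iota^*[D']$ recovers $\iota^*[E]$ with the two sign changes cancelling.
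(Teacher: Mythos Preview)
Your proposal is correct and follows essentially the same approach as the paper: functoriality of the Heisenberg gerbe under pullback and change of band, combined with the classical adjunction formula and the sign cancellation coming from bilinearity. The paper's own proof is a one-liner at the level of cohomology classes (``the restriction map $H^*(X,\cK_i)\to H^*(E,\cK_i^E)$ respects cup-product, so this follows from the classical adjunction formula''), whereas you carry out the same argument at the level of the gerbes themselves and make the double-sign cancellation $\cG_{V^{-1},(V')^{-1}}\simeq\cG_{V,V'}$ explicit; the technical worries you flag in your last paragraph are precisely what the paper sweeps into the phrase ``respects cup-product,'' and are handled by the covariant and contravariant functoriality recorded in Theorem~\ref{primo}(iv).
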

\begin{proof} 
  Since the restriction map $H^*(X, \cK_i) \to H^*(E, \cK^E_i)$ respects cup-product, this follows from the classical adjunction formula for $D$ and $D'$. 
\end{proof}

\subsection{Parshin's adelic groups} Let $S$ be a smooth proper surface over a field $F$. 
For any choice of a curve $C$ in $S$ and a point $P$ on $C$, Parshin
\cite[(18)]{Parshin} has introduced a discrete Heisenberg group
\begin{equation*}
  0 \to \bbZ \to \tilde{\Gamma}_{P,C} \to \Gamma_{P,C} \to 0,
\end{equation*}
where $\Gamma_{P,C}$ is isomorphic (non-canonically) to $\bbZ \oplus \bbZ$;  he has shown \cite[end of
\S 3]{Parshin} how a suitable product of these groups leads to an adelic
description of $CH^2(S)$ and the intersection pairing \eqref{int-pairing}.  His
constructions are closely related to an adelic resolution of the sheaf
$H_{\cK_1, \cK_1}$ on $S$. 

\section{Algebraic cycles of codimension two}\label{sec:codim2}
Throughout this section,  $X$ is a smooth proper variety over a field $F$. Let $\eta \colon \textrm{Spec}~F_X\to X$ be the generic point of $X$ and write $K_i^{\eta}$ for the sheaf $j_* K_i(F_X)$.

In this section, we construct the Gersten gerbe $\cC_{\alpha}$ for any codimension two cycle $\alpha$ on $X$, provide various equivalent 
descriptions of $\cC_{\alpha}$ and use them to prove Theorems \ref{k2gerbe}, \ref{comparison}.  As a consequence, we obtain Theorems \ref{secondo} and \ref{terzo} of the introduction. 

 \subsection{Bloch-Quillen formula} \label{sec:bloch-quillen} Recall  the (flasque) Gersten resolution\footnote{This resolution exists for any smooth variety over $F$.}  \cite[\S7]{Quillen} \cite[p. 276]{handbook} \cite{Gerstenicm} of the Zariski sheaf $\cK_i$ associated with the presheaf $U \mapsto K_i(U)$:

\begin{equation}\label{gersten}
0 \too \cK_i \too \bigoplus_{x \in X^{(0)}} j_* K_i(x) \too \bigoplus_{x \in X^{(1)}} j_* K_{i-1}(x) \too \cdots \bigoplus_{x \in X^{(i-1)}} j_* K_1(x) \xrightarrow{\delta_{i-1}} \bigoplus_{x \in X^{(i)}}{\oplus} ~j_* K_0(x);
\end{equation}
here, any point $x\in X^{(m)}$ corresponds to a subvariety of codimension $m$
and the map $j$ is the canonical inclusion $x \hookrightarrow X$. So $\cK_i$ is
quasi-isomorphic to the complex
\begin{equation}
  \label{eq:9}
  G_i^X =  \bigl[ K_i^{\eta} \too \bigoplus_{x \in X^{(1)}}
  j_* K_{i-1}(x) \too \cdots \bigoplus_{x \in X^{(i-1)}} j_* K_1(x)
  \xrightarrow{\delta_{i-1}} \bigoplus_{x \in X^{(i)}} j_* K_0(x) \bigr] .
\end{equation}

By (\ref{gersten}), there is a functorial isomorphism \cite[\S7, Theorem 5.19]{Quillen} \cite[Corollary 72, p.~276]{handbook} 
\begin{equation}\label{BQ}
  \bigoplus_i CH^i(X) \xrightarrow{\sim} \bigoplus_i H^i(X, \cK_i); \qquad \text{\makebox[0pt][l]{(Bloch-Quillen formula)}}
\end{equation}
this is an isomorphism of graded rings: D.~Grayson has proved that the intersection product on $CH(X) = \oplus_i CH^i(X)$ corresponds to the cup-product in cohomology \cite[Theorem 77, p.278]{handbook}. Thus, algebraic cycles of codimension $n$ give $n$-cocycles of the sheaf $\cK_n$ on $X$ and that two such cocycles are cohomologous exactly when the algebraic cycles are rationally equivalent.  

The final two maps in \eqref{gersten} arise essentially from the valuation and the tame symbol map \cite[pp.351-2]{Bloch}. Let $R$ be a discrete valuation ring, with fraction field $L$; let ${\rm ord}: L^{\times} \to \bbZ$ be the valuation and let $l$ be the residue field. The boundary maps from the localization sequence for $\Spec R$ are known explicitly: the map $L^{\times} = K_1(L) \to K_0(l) = \bbZ$ is the map ${\rm ord}$ and the map $K_2(L) \to K_1(l) = l^{\times}$ is the tame symbol. This applies for any normal subvariety $V$ (corresponds to a $y \in X^{(i)}$) and a divisor $x$ of $V$ (corresponding to a $x \in X^{(i+1)}$). 

\subsection{Divisors}\label{divisors} We recall certain well known results about divisors and line bundles for comparison with  the results below for the $\cK_2$-gerbes attached to codimension two cycles. 

If $A$ is a sheaf of abelian groups on $X$, then ${\rm Ext}_X^1(\mathbb Z, A) = H^1(X, A)$ classifies $A$-torsors on $X$. Given an extension $E$
\begin{equation*}
  0 \too A \too E \overset{\pi}{\too} \bbZ \too 0
\end{equation*}
of abelian sheaves on $X$, the corresponding $A$-torsor is simply $\pi^{-1}(1)$ (a sheaf of sets). When $X$ is a point, then $\pi^{-1}(1)$ is a coset of $\pi^{-1}(0) = A$, i.e., a $A$-torsor. The classical correspondence \cite{Hartshorne} between Weil divisors (codimension-one algebraic cycles) $D$ on $X$, Cartier divisors,  line bundles $\cL_D$, and  torsors $\cO_D$ over $\cO_X^* =\bbG_m =\cK_1$ comes from the Gersten sequence (\ref{gersten}) for $\cK_1$ (see also \cite[2.2]{Gerstenicm}): 
\begin{equation}\label{gersten1}
 0 \too \cO_X^* \too F_X^{\times} \overset{d}{\too} \bigoplus_{x \in X^{(1)}} j_*\bbZ \to 0,
 \end{equation}
 where $F_X$ is the constant sheaf of rational functions on $X$ and the sum is over all irreducible effective divisors on $X$, using that $K_0(L) \cong \bbZ$ and $K_1(L) = L^{\times}$ for any field $L$. As a Weil divisor $D = \Sigma_{x\in X^1}~ n_x x$ is a formal combination with integer coefficients of subvarieties of codimension one of $X$, it determines a map of sheaves
 \begin{equation*}
   \psi\colon \bbZ \too \bigoplus_{x \in X^{(1)}} j_*\bbZ;
 \end{equation*}
 $\psi(1)$ is the section with components $n_x$. The $\cO_X^*$-torsor $\cO_D$ attached to $D$ is given as the subset
\begin{equation}\label{divisore}
{d}^{-1}(\psi(1)) \subset F_X^{\times}.
\end{equation}

 A \v Cech description of $\cO_D$ relative to an Zariski open cover $\{U_i\}$ of $X$ is as follows. Pick a rational function $f_i$ on $U_i$ with pole of order $n_x$ along $x$ for all $x \in U^{(1)}_i$ (so $x$ is a irreducible subvariety of codimension one of $U_i$); we view $f_i \in F_X^{\times}$. On $U_i \cap U_j$, one has $f_i = g_{ij}f_j$ for unique $g_{ij} \in \cO_X^*(U_i \cap U_j)$; the collection $\{g_{ij}\}$ is a \v Cech one-cocycle with values in $\cO_X^*$ representing $\cO_D$. 

For any $D$, $\cL_D$ is trivial on the complement of the support of $D$.

\begin{remark}\label{picard}
  For each open $U$ of $X$, one has the  Picard category $\tors_U(\cO^*)$ of $\cO^*$-torsors on $U$. These combine to the Picard stack $\tors(\cO^*)$ of $\cO^*$-torsors 
on $X$.  The Gersten sequence incarnates this Picard stack \cite[1.10]{DeligneB}. \qed
\end{remark}

\subsection{The Gersten gerbe of a codimension two cycle} 
 We next show that every cycle $\alpha$ of codimension two on $X$ determines a gerbe $\cC_{\alpha}$ with band $\cK_2$.  The Gersten complex \eqref{gersten} enables us to give a geometric description of $\cC_{\alpha}$; see Remark \ref{trivial-eta} below. 
 
  The cycle $\alpha$ provides a natural map
  \begin{equation}\label{gersten4}
    \begin{tikzcd}
      &&&& {\bbZ} \arrow[d, red, "\phi" blue] \\
      0 \arrow[r] &\cK_2 \arrow[r, "\mu"] &  K_2^{\eta} \arrow[r, "\nu"] & {\bigoplus\limits_{x \in X^{(1)}} j_* K_1(x)} \arrow[r, "{\delta}"]  &\bigoplus\limits_{x \in X^{(2)}} j_* K_0(x) \arrow[r] & 0\\
    \end{tikzcd}
  \end{equation}
and an exact sequence (by pullback)
\begin{equation}\label{TEE}
  0 \too \cK_2 \too K_2^{\eta} \overset{\nu}{\too} T \overset{\delta}{\too} \bbZ \too 0.
\end{equation} 
This two-extension of $\bbZ$ by $\cK_2$ gives a class in $\Ext^2(\bbZ,\cK_2) = H^2(X, \cK_2)$. 
Writing $\alpha = \sum_x n_x [x]$ as a sum over  $x\in X^{(2)}$ (irreducible codimension two subvarieties), then the $x$-component of $\phi (1)$ corresponds to $n_x$ under the canonical isomorphism $K_0(x) \cong \bbZ$.  The maps $\delta$ and $\nu$ are essentially given by the valuation (or ${\rm ord}$) and tame symbol maps; see \S \ref{sec:bloch-quillen}. 

\begin{definition}
  The gerbe $\cC_{\alpha}$ (associated with the cycle $\alpha$) is obtained by applying the results of \S \ref{four-term} to  \eqref{gersten4}, \eqref{TEE}; thus it is an example of the gerbe $\cL_{\beta}$ of \S \ref{four-term}, where $\beta=\phi$ and $\cL$ is the Picard stack associated to the complex $[K_2^{\eta}\to \bigoplus\limits_{x \in X^{(1)}} j_* K_1(x)]$.
\end{definition}

\begin{remark} Corollary \ref{diversi} provides two descriptions of $\cL_{\beta}$. It should be emphasized that both descriptions are useful. One of them, which we make explicit below, is crucial for the comparison with the Heisenberg gerbe (Theorem \ref{comparison}); the other succinct description  is given in Remark \ref{trivial-eta}.

  \begin{enumerate}
  \item For any open set $U$ of $X$, the category $\cC_{\alpha}(U)$ has objects $u \in \bigoplus\limits_{x \in X^{(1)}} j_* K_1(x)$ with $\delta{u} = \phi(1)$ and morphisms from $u$ to $u'$ are elements $a \in K_2^{\eta}$ with $\nu(a) = u'-u$. 

  \item Any Hom-set ${\rm Hom}_{\cC_{\alpha}}(u, u')$ is a $K_2(U)$-torsor.

  \item The category $\cC_{\alpha}(U)$ can be described geometrically in terms of the ${\rm ord}$ and tame maps. For instance, let $X$ be a surface. Write the zero-cycle $\alpha$ as a finite sum $ \sum_{i\in I} n_i x_i$ of points $x_i$ of $X$. We assume $n_i \neq 0$ and write $V$ for the complement of the support of $\alpha$. Any non-zero rational function $f$ on a curve $C$ defines an object of $\cC_{\alpha}(U)$ if $f$ is invertible on $C \cap U\cap V$ and satisfies ${\rm ord}_{x_i} f = n_i$ for each  $x_i \in U$ (assuming, for simplicity, that $x_i$ is a smooth point of $C$). A general object of $\cC_{\alpha}(U)$ is a finite collection $u = \{C_j, f_j\}$ of curves $C_j$ and non-zero rational functions $f_j$ on $C_j$ such that $f_j$ is invertible on $C_j \cap U \cap V$ and $\sum {\rm ord}_{x_i} f_j = n_i$ (an index $j$ occurs in the sum if $x_i \in C_j$) for each  $x_i \in U$. A morphism from  $u$ to $u'$ is an element $a \in K_2^{\eta}$ whose tame symbol is $u' - u$. \qed 
  \end{enumerate}
\end{remark}

\begin{theorem}\label{k2gerbe} (i) $\cC_{\alpha}$ is a gerbe on $X$ with band $\cK_2$. 

(ii) Under \eqref{BQ}, the class of $\cC_{\alpha} \in H^2(X, \cK_2)$ corresponds to $\alpha \in CH^2(X)$.

(iii) $\cC_{\alpha}$ is equivalent to $\cC_{\alpha'}$ (as gerbes) if and only if the cycles $\alpha$ and $\alpha'$ are rationally equivalent.\end{theorem}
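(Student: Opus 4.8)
All three parts rest on the four-term-complex machinery of \S\ref{four-term} together with the fact that the \emph{flasque} Gersten resolution \eqref{gersten} computes $H^\bullet(X,\cK_2)$. Write $M_1=\bigoplus_{x\in X^{(1)}}j_*K_1(x)$ and $M_2=\bigoplus_{x\in X^{(2)}}j_*K_0(x)$ for the last two terms of \eqref{gersten}. First I would record that \eqref{TEE} is a genuine four-term exact sequence: it is the pullback along $\phi\colon\bbZ\to M_2$ of the exact Gersten sequence $0\to\cK_2\to K_2^\eta\overset{\nu}{\to}M_1\overset{\delta}{\to}M_2\to 0$, so $T=M_1\times_{M_2}\bbZ$. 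Since that sequence is exact as sheaves, $\delta$ is an epimorphism; the pullback of an epimorphism is an epimorphism and the fibre product preserves $\ker\delta=\Im\nu$, so \eqref{TEE} is exact with $A=\cK_2$, $L_1=K_2^\eta$, $L_0=T$, $B=\bbZ$ and generalized point $\beta=1\in H^0(X,\bbZ)$. Part (i) is then immediate from Corollary \ref{diversi}(i) applied to \eqref{TEE}: $\cC_\alpha=\cL_\beta$ is a gerbe, and by the construction of \S\ref{four-term} its band is $A=\cK_2$.

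Part (ii) is the crux. By Lemma \ref{comparisons} the class of $\cC_\alpha=\cL_\beta$ in $H^2(X,\cK_2)$ is $d^2(\beta)$, the image of $\beta$ under the iterated cohomological boundary map $d^2\colon H^0(X,\bbZ)\to H^1(X,C)\to H^2(X,\cK_2)$ of \eqref{TEE}, with $C=\Im\nu$. The morphism of four-term sequences from \eqref{TEE} to the full Gersten sequence (the identity on $\cK_2$ and $K_2^\eta$, the projection $T\to M_1$, and $\phi\colon\bbZ\to M_2$) is compatible with iterated boundary maps, so $d^2(1)$ equals the image of $\alpha=\phi(1)\in H^0(X,M_2)=Z^2(X)$ under the iterated boundary map $\widetilde{d}^{\,2}$ of the Gersten sequence. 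Now I would use flasqueness: the sheaves $K_2^\eta$, $M_1$, $M_2$ are $\Gamma(X,-)$-acyclic, so $H^2(X,\cK_2)$ is the cohomology of the global-sections complex and equals $\operatorname{coker}\bigl(\Gamma(X,M_1)\to Z^2(X)\bigr)=CH^2(X)$ --- precisely the Bloch-Quillen isomorphism \eqref{BQ}. For such an acyclic resolution a dimension-shifting computation shows $\widetilde{d}^{\,2}$ is exactly the edge map sending a global section of the top term to its own class in the global-sections complex; hence $\widetilde{d}^{\,2}(\alpha)=[\alpha]\in CH^2(X)$. I expect the main obstacle to be making this last identification airtight, \ie checking that the abstract gerbe-theoretic class produced by \S\ref{four-term} and Lemma~\ref{comparisons} and the concrete Bloch-Quillen cycle class really are the same element; I would settle it by unwinding both maps on a \v Cech cover, comparing the $\cK_2$-valued $2$-cocycle attached to $\cC_\alpha$ (as at the end of \S\ref{lifting}) with the Gersten cocycle of $\alpha$.

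Finally, (iii) is formal. Abelian gerbes with band $\cK_2$ are classified up to equivalence by $H^2(X,\cK_2)$ (\S\ref{sec:gerbes}), so $\cC_\alpha\iso\cC_{\alpha'}$ as $\cK_2$-gerbes if and only if their classes in $H^2(X,\cK_2)$ coincide. By (ii) these classes are the Bloch-Quillen images of $\alpha$ and $\alpha'$, which agree if and only if $\alpha$ and $\alpha'$ define the same element of $CH^2(X)$, that is, are rationally equivalent.
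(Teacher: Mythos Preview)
Your proposal is correct and follows essentially the same route as the paper: invoke the four-term-complex machinery of \S\ref{four-term} and Corollary~\ref{diversi} for (i), Lemma~\ref{comparisons} for (ii), and the classification of abelian gerbes by $H^2$ together with \eqref{BQ} for (iii). The only difference is that you are more explicit in (ii) about why the iterated boundary map $\widetilde{d}^{\,2}$ of the Gersten four-term sequence coincides with the Bloch--Quillen edge map (using flasqueness and dimension shifting), whereas the paper simply asserts that the Bloch--Quillen formula \emph{arises from} the map $d^2\colon Z^2(X)\to H^2(X,\cK_2)$ of Lemma~\ref{comparisons} and moves on; your extra care here is warranted and not a departure in method.
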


\begin{proof}  (i) The Gersten sequence \eqref{gersten4} is an example of a four-term complex, discussed in \S \ref{four-term}. As the stack $\cC_{\alpha}$ is a special case of the gerbe $\cL_{\beta}$ constructed in \S \ref{four-term}, (i) is obvious. 

In more detail:  We first observe that \eqref{TEE} provides a quasi-isomorphism between $\cK_2$ (sheaf) and the complex (concentrated in degree zero and one)
\begin{equation}\label{eta}
\eta: \cK_2 \to [K_2^{\eta} \xrightarrow{\nu} Ker(\delta)].
\end{equation}
 
Now, suppose $U$ is disjoint from the support of $\alpha$. On such an open set $U$, the map $\phi$ is zero. This means that the objects $u$ of the category $\cC_{\alpha}(U)$ are elements of ${\rm Ker}(\delta)$. The gerbe $\cC_{\alpha}$, when restricted to $U$, is equivalent to  the Picard stack of $\cK_2$-torsors \cite[Expose XVIII, 1.4.15]{SGA4}: in the complex $[K_2^{\eta} \xrightarrow{\nu} {\rm Ker}(\delta)]$,  one has ${\rm Coker} (\nu) =0$ and ${\rm Ker}(\nu) =\cK_2|_U$. Since for any abelian sheaf $G$, the category $\tors(G)$ is the trivial $G$-gerbe, $\cC_{\alpha}$ is the trivial gerbe with band $\cK_2$ on the complement of the support of $\alpha$. 
 
Now, consider an arbitrary open set $V$ of $X$.  By the exactness of \eqref{TEE}, there is an open covering $\{U_i\}$ of $V$ and sections $u_i \in T(U_i)$ with $t_i = \phi(1)$. Fix $i$ and let $U$ be an open set contained in $U_i$. Then the category $\cC_{\alpha}(U)$ is non-empty. The category $D$ with objects $d\in {\rm Ker}(\delta) \subset T(U)$ and morphisms $\mathrm{Hom}_D(d,d')=$ elements $a \in K_2^{\eta}$ with $\nu(a) = d'-d$. The category $D$ is clearly equivalent to the category of $K_2(U)$-torsors. The functor which sends $d$ to $d +u_i$ is easily seen to be an equivalence of categories between $D$ and $\cC_{\alpha}(U)$.  Thus $\cC_{\alpha}$ is a gerbe with band $\cK_2$. 

(ii) The Bloch-Quillen formula \eqref{BQ} arises from the canonical map
\begin{equation*}
  d^2: Z^2(X) \to H^2(X, \cK_2)
\end{equation*}
of Lemma \ref{comparisons} attached to the four-term complex \eqref{gersten4}. As $\cC_{\alpha}$ is a gerbe of the form $\cL_{\beta}$, (ii) follows from Lemma \ref{comparisons}. 

(iii) This is a simple consequence of the Bloch-Quillen formula (\ref{BQ}).
\end{proof}

\begin{remark}\label{trivial-eta} (i) Split the sequence \eqref{gersten4} into
  \begin{equation*}
    0 \too \cK_2 \too K_2^{\eta} \too K_2^{\eta}/{\cK_2} \too 0
  \end{equation*}
  and
  \begin{equation*}
    0 \too K_2^{\eta}/{\cK_2} \too \bigoplus_{x \in X^{(1)}} j_* K_1(x)  \too \bigoplus_{x \in X^{(2)}} j_* K_0(x) \too 0.
  \end{equation*}

Since the Gersten resolution is by flasque sheaves, one has $H^1(X, K_2^{\eta}/{\cK_2}) \xrightarrow{\sim} H^2(X, \cK_2)$. As Cartier divisors are elements of $H^0(X, K_1^{\eta}/{\cK_1})$, we view elements of $H^1(X, K_2^{\eta}/{\cK_2})$ as \emph{Cartier cycles of codimension two}. The map $Z^1(X) \to H^1(X, K_2^{\eta}/{\cK_2})$ attaches to any cycle its Cartier cycle.  Lemma \ref{diversi} provides the following succinct description of $\cC_{\alpha}$: 

\emph{it is the gerbe of liftings (to a $K_2^{\eta}$-torsor) of the $(K_2^{\eta}/{\cK_2})$-torsor determined by $\alpha$.}
 
 (iii) The proof of Theorem \ref{k2gerbe} provides a canonical trivialization\footnote{This uses \eqref{eta}.} $\eta_{\alpha}$ of the gerbe $\cC_{\alpha}$ on the complement of the support of $\alpha$.
 
 (iv) The pushforward of $\cC_{\alpha}$ along $\cK_2 \to \Omega^2$ produces a $\Omega^2$-gerbe which manifests the cycle class of $\alpha$ in de Rham cohomology $H^2(X, \Omega^2)$. If $\alpha$ is homologically equivalent to zero, then this latter gerbe is trivial, i.e., it is the Picard stack of $\Omega^2$-torsors. \qed\end{remark}

\begin{remark}\label{empty} It may be instructive to compare the $\bbG_m$-torsor $\cO_D$ attached to a divisor $D$ of $X$ and the $\cK_2$-gerbe $\cC_{\alpha}$ attached to a codimension-two cycle. Let $U$ be any open set of $X$. This goes, roughly speaking, as follows. 
\begin{itemize} 
\item $\cO_D$: The set of divisors on $U$ rationally equivalent to zero is exactly the image of $d$ over $U$ in \eqref{gersten1}. So, the set $\cO_D(U)$ is non-empty if $D =0$ in $CH^1(U)$. The sections of $\cO_D$ over $U$ are given by rational functions $f$ on $U$ whose divisor is $D|_U$. In other words, the sections are rational equivalences  between the divisor $D$ and the empty divisor. The set $\cO_D(U)$ is a torsor over $\bbG_m(U)$. 

\item $\cC_{\alpha}$: We observe that the image of $\delta$ in \eqref{gersten4} consists of codimension-two cycles rationally equivalent to zero. So $\cC_{\alpha}$ is non-empty if $\alpha = 0$ in $CH^2(U)$.   Each rational equivalence between $\alpha$ and the empty codimension-two cycle gives an object of $\cC_{\alpha}(U)$. The sheaf of morphisms between two objects is a $\cK_2$-torsor. \qed
\end{itemize}
\end{remark}

The Bloch-Quillen formula \eqref{BQ} states that equivalence classes of $\cK_2$-gerbes are in bijection with codimension-two cycles (modulo rational equivalence) on $X$. We have seen that a codimension-two cycle determines a $\cK_2$-gerbe (an actual gerbe, not just one up to equivalence). It is natural to ask whether the converse holds:  (see Proposition \ref{gerbe2cyclea} in this regard) 
\begin{question}\label{gerbe2cycle}
Does a $\cK_2$-gerbe on $X$ determine an actual codimension-two cycle?
\end{question}

Consider the $\cK_2$-gerbe $\cG_{D, D'}$ attached to a pair of divisors $D, D'$ on $X$. If $\cG_{D,D'}$ determines an actual codimension-two cycle, then any pair $D, D'$ of divisors  determines a canonical codimension-two cycle on $X$. This implies that there is a canonical intersection of Weil divisors and this last statement is known to be false.  So the answer to Question \ref{gerbe2cycle}  is negative in general. 

\subsection{Gerbes and cohomology with support}\label{support}

Let $F$ be an abelian sheaf on a site $\C$. Recall that (see \eg \cite[\S5.1]{Milne})
$H^1(F)$ is the set of isomorphism classes of auto-equivalences of the trivial gerbe $\tors(F)$ with band $F$; more generally, given gerbes $\cG$ and $\cG'$ with band $F$, then the set $\Hom_\C(\cG, \cG')$ (assumed non-empty) of maps of gerbes is a torsor for $H^1(F)$.

Recall also that, for any sheaf $F$ on a scheme $V$, the cohomology $H^*_Z(V, F)$ with support in a  a closed subscheme $Z$ of $V$ fits into an exact sequence \cite[\S 5]{Bloch}
\begin{equation}
\cdots \too H^i_Z(V,F) \too H^i(V,F) \too H^i(V - Z,F) \too H^{i+1}_Z(V, F) \too \cdots ;
\end{equation} 
the exactness of
\begin{equation*}
   H^1(V, F) \too H^1(V- Z, F) \too H^2_Z(V,F) \too H^2(V, F) \too H^2(V -Z, F)
\end{equation*}
leads to an interpretation of the group $H^2_Z(V, F)$: it classifies isomorphism classes of pairs $(\cG, \phi)$ consisting of a gerbe $\cG$ with band $F$ on $V$ and a trivialization $\phi$ of $\cG$ on $V-Z$, i.e., $\phi$ is an equivalence of $\cG|_{V-Z}$ with $\tors(F|_{V-Z})$. 
 
 \subsection{Geometric interpretation of some results of Bloch}  Bloch \cite{Bloch} has proved that: 
\begin{enumerate}
\item \cite[Proposition 5.3]{Bloch} Any codimension-two cycle $\alpha$ on $X$ has a canonical cycle class  $[\alpha] \in H^2_Z(X, \cK_2)$; here $Z$ is the support of $\alpha$. 
\item \cite[Theorem 5.11]{Bloch} If $D$ is a smooth divisor of $X$, then $\mathrm{Pic}(D) = H^1(D, \cK_1)$ is a 
direct summand of $H^2_D(X, \cK_2)$. 
\end{enumerate}

For (1), we note that, by Remark \ref{trivial-eta}, the gerbe $\cC_{\alpha}$ has a trivialization $\eta_{\alpha}$ on $X - Z$.  By the above interpretation of $H^2$ with support, the pair $(\cC_{\alpha}, \eta_{\alpha})$  defines an element of $H^2_Z(X, \cK_2)$; this is the canonical class $[\alpha]$.

For (2), recall that Bloch constructed maps $a: \mathrm{Pic}(D) \to H^2_D(X, \cK_2)$ and $b: H^2_D(X, \cK_2) \to \mathrm{Pic}(D)$ with $b\circ a$ the identity on $\mathrm{Pic}(D)$. We can interpret the map $a$ as follows. Note that any divisor $E$ of $D$ is a codimension-two cycle $\alpha$ on $X$. The $\cK_2$-gerbe $\cC_{\alpha}$ on $X$ has a canonical trivialization $\eta_{\alpha}$ on $X -E$ (and so also on the smaller $X-D$). The association $E \mapsto (\cC_{\alpha}, \eta_{\alpha})$ gives the homomorphism $a: Pic(D) \to H^2_D(X, \cK_2)$.  

These results of Bloch provide a partial answer to Question \ref{gerbe2cycle} summarized in the following 
\begin{proposition}\label{gerbe2cyclea} Let $\cG$ be a $\cK_2$-gerbe on $X$ and let $\beta \in CH^2(X)$ correspond to $\cG$ in the Bloch-Quillen formula \eqref{BQ}.   Let $\phi$ be a trivialization of $\cG$ on the complement $X -D$ of a smooth divisor $D$ of $X$. Then, $\beta$ can be represented by a divisor of $D$ (unique up to rational equivalence on $D$). 
\end{proposition}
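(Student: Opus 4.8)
The plan is to package the data $(\cG,\phi)$ as a cohomology class with support in $D$ and then transport it to $\mathrm{Pic}(D)$ using the results of Bloch recalled above. First I would note that, since $\phi$ trivializes $\cG$ on $U=X-D$, the class $[\cG]=\beta\in H^2(X,\cK_2)$ restricts to $0$ in $H^2(U,\cK_2)$; by the localization sequence and the interpretation of $H^2$ with support recalled in \S\ref{support}, the pair $(\cG,\phi)$ determines a class $\gamma\in H^2_D(X,\cK_2)$ whose image under the forget-support map $j\colon H^2_D(X,\cK_2)\to H^2(X,\cK_2)$ is $[\cG]=\beta$ (identifying $H^2(X,\cK_2)=CH^2(X)$ via \eqref{BQ}).

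Next I would apply Bloch's retraction $b\colon H^2_D(X,\cK_2)\to \mathrm{Pic}(D)$ to obtain $E:=b(\gamma)\in \mathrm{Pic}(D)=CH^1(D)$, a divisor of $D$ well defined up to rational equivalence on $D$; this well-definedness is exactly the asserted uniqueness. The remaining content of the proposition is then the single identity $\iota_*E=\beta$ in $CH^2(X)$, where $\iota\colon D\hookrightarrow X$ and $\iota_*\colon CH^1(D)\to CH^2(X)$ is the pushforward of cycles. Using the geometric description of Bloch's map $a$ (it sends a divisor $E'$ of $D$ to the pair $(\cC_{\iota_* E'},\eta_{\iota_* E'})$) together with Theorem \ref{k2gerbe}(ii), one has $j\circ a=\iota_*$ and $b\circ a=\id$. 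Hence $\iota_*E=j(a(b(\gamma)))$ and the claim reduces to $j\bigl((ab-\id)(\gamma)\bigr)=0$. Since $b(ab-\id)=0$, the element $(ab-\id)(\gamma)$ lies in $\ker b$, so it suffices to show that $j$ vanishes on $\ker b$.

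To settle this I would compute $H^2_D(X,\cK_2)$ directly from the Gersten resolution \eqref{gersten} for $\cK_2$. The flasque sheaves in \eqref{gersten} are acyclic for sections with support in $D$, so $H^\bullet_D(X,\cK_2)$ is the cohomology of the subcomplex of $D$-supported global sections; using that only the generic points of the components of $D$ (resp.\ the codimension-one points of $D$) contribute, this subcomplex is
\begin{equation*}
  k(D)^\times \xrightarrow{\ \mathrm{div}_D\ } \bigoplus_{x\in D^{(1)}} \bbZ
\end{equation*}
placed in degrees $1,2$. Its top cohomology is $CH^1(D)=\mathrm{Pic}(D)$, so $b$ is in fact an isomorphism and $\ker b=0$; moreover the inclusion of this subcomplex into the full Gersten complex of \S\ref{sec:bloch-quillen} identifies $j$ with the pushforward $\iota_*\colon CH^1(D)\to CH^2(X)$. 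Thus $j$ vanishes on $\ker b=0$, and $\beta=j(\gamma)=\iota_*E$, giving existence.

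I expect the main obstacle to be precisely the purity input of the last paragraph: justifying that the flasque Gersten sheaves compute cohomology with support in $D$, and matching the induced differential with the divisor map $\mathrm{div}_D$, so that $H^2_D(X,\cK_2)\iso \mathrm{Pic}(D)$ and the forget-support map becomes the Gysin pushforward $\iota_*$. Equivalently, this is the statement that Bloch's summand is in fact all of $H^2_D(X,\cK_2)$; once this identification is in hand, both existence and the uniqueness up to rational equivalence on $D$ of the representing divisor $E$ follow immediately.
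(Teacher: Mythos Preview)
Your approach is correct and matches the paper's: the paper offers no separate proof, presenting the proposition as a summary of the preceding discussion of Bloch's maps $a,b$ on $H^2_D(X,\cK_2)$ together with the interpretation of $H^2_D$ in terms of pairs $(\cG,\phi)$ from \S\ref{support}. You go further than the paper by noticing that Bloch's direct-summand statement $b\circ a=\id$ alone does not immediately yield the existence claim---one still needs the forget-support map $j$ to annihilate $\ker b$---and you close this gap by computing $H^2_D(X,\cK_2)$ directly from the flasque Gersten resolution, obtaining the purity isomorphism $H^2_D(X,\cK_2)\cong\mathrm{Pic}(D)$ (so in fact $\ker b=0$) and identifying $j$ with $\iota_*$. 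That computation is sound: flasque sheaves are $\Gamma_D$-acyclic, and the $D$-supported global sections of $G_2^X$ reduce to the divisor complex of $D$. One small cosmetic point: if $D$ has several components your degree-$1$ term should read $\bigoplus_i k(D_i)^\times$, but the conclusion is unchanged.
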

Note that the data of $\phi$ is crucial: the map $\mathrm{Pic}(D) \to CH^2(X)$
is not injective in general \cite[(iii), p.~269]{Bloch2}.
\begin{proposition}
  Let $i\colon D\to X$ and $j\colon U=X-D\to X$ be the inclusion maps.  We have the following short exact sequence of Picard 2-stacks
  \begin{equation*}
    \tors (i_*\cK_1^D) \too \ger (\cK_2^X) \too \ger (j_*\cK_2^U).
  \end{equation*}
\end{proposition}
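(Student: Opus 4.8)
The plan is to realize all three Picard $2$-stacks through Gersten-type complexes and to obtain the sequence from a single short exact sequence of such complexes on $X$. Recall from \eqref{eq:8} that $\ger(\cK_2^X)\iso [G_2^X]\sptilde$, where $G_2^X$ is the complex \eqref{eq:9} placed in homological degrees $2,1,0$. The key observation is that the stratification of $X$ into the closed set $D$ and its open complement $U$ refines the indexing sets of \eqref{eq:9}: every $x\in X^{(m)}$ either lies in $U$, where it is a codimension-$m$ point of $U$, or lies in $D$, where (as $D$ is a smooth divisor) it is a codimension-$(m-1)$ point of $D$. Splitting each term of $G_2^X$ accordingly, one checks that the summands indexed by points of $D$ form a \emph{subcomplex}: the only boundary maps leaving a point $x\in D$ go to points of $\overline{\{x\}}\subseteq D$, while the degree-$2$ term $K_2^\eta$ (the generic point of $X$) contributes nothing to $D$. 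Termwise this subcomplex is $i_*$ of the Gersten complex $G_1^D$ for $\cK_1$ on $D$ (sitting in homological degrees $1,0$), and the quotient is the Gersten complex $G_2^U$ for $\cK_2$ on $U$ pushed forward along $j$. Thus I obtain a short exact sequence of complexes of abelian sheaves on $X$
\begin{equation*}
  0 \too i_* G_1^D \too G_2^X \too j_* G_2^U \too 0 .
\end{equation*}

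Next I would identify the two outer complexes. Since $i$ is a closed immersion, $i_*$ is exact, so $i_* G_1^D$ is still a resolution, and by \eqref{gersten1} applied to $D$ it is quasi-isomorphic to $(i_*\cK_1^D)[1]$; by the dictionary of \S\ref{four-term} this gives $[i_* G_1^D]\sptilde\iso \tors(i_*\cK_1^D)$. For the quotient I would run the long exact homology sequence of the displayed sequence. Writing $\cH_\bullet$ for homology sheaves, the Gersten resolution gives $\cH_2(G_2^X)=\cK_2^X$ and $\cH_1(G_2^X)=\cH_0(G_2^X)=0$, while $\cH_1(i_* G_1^D)=i_*\cK_1^D$ and $\cH_2(i_* G_1^D)=\cH_0(i_* G_1^D)=0$. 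The long exact sequence then forces $\cH_1(j_* G_2^U)=\cH_0(j_* G_2^U)=0$ and leaves only $\cH_2(j_* G_2^U)=j_*\cK_2^U$, fitting into the extension
\begin{equation*}
  0\too \cK_2^X \too j_*\cK_2^U \too i_*\cK_1^D \too 0 .
\end{equation*}
In particular $j_* G_2^U\simeq (j_*\cK_2^U)[2]$, whence $[j_* G_2^U]\sptilde\iso \ger(j_*\cK_2^U)$.

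To conclude I would invoke the exactness of the correspondence of \cite{Tatar} between complexes in degrees $[-2,0]$ and strictly commutative Picard $2$-stacks, already used in \S\ref{four-term}: a short exact sequence of such complexes is carried to a short exact sequence (fiber sequence) of the associated Picard $2$-stacks. Applying $(-)\sptilde$ to the short exact sequence of complexes and substituting the three identifications yields exactly
\begin{equation*}
  \tors(i_*\cK_1^D) \too \ger(\cK_2^X) \too \ger(j_*\cK_2^U),
\end{equation*}
as required.

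I expect the main obstacle to be the identification of the quotient term: because $j$ is only an open immersion, $j_*$ is not exact, and a priori $[j_* G_2^U]\sptilde$ need not have band $j_*\cK_2^U$ (its band could be contaminated by $R^{q}j_*\cK_2^U$ for $q\ge 1$). What makes it work — and what must be argued rather than assumed — is that these potential higher homology sheaves are annihilated in the long exact sequence precisely because the $D$-contribution $i_* G_1^D$ has homology concentrated in degree $1$ (so $\cH_0(i_* G_1^D)=0$), combined with the exactness of the Gersten resolution of $\cK_2^X$. A secondary point worth spelling out is the verification that the $D$-indexed summands genuinely form a subcomplex of \eqref{eq:9}: this uses that $D$ is closed (so the boundary maps preserve the stratum) and smooth (so codimension in $D$ is codimension in $X$ minus one, aligning $G_1^D$ with $G_2^X$ with no extra degree shift).
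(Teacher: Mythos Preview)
Your proof is correct and follows exactly the approach of the paper: both derive the statement from the short exact sequence of complexes $0\to i_*G_1^D\to G_2^X\to j_*G_2^U\to 0$ and then pass to associated Picard $2$-stacks via~\eqref{eq:8}. You have in fact supplied considerably more detail than the paper's four-line proof, in particular the long exact sequence argument identifying the homology of $j_*G_2^U$ and the verification that the $D$-indexed summands form a subcomplex.
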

\begin{proof}
  Analyzing the Gersten sequence \eqref{gersten}, \eqref{eq:9} for $\cK_2$ on
  $X$ and $U$, we get the short exact sequence:
  \begin{equation*}
    0\too i_*G_1^D \too  G_2^X \too j_*G_2^U \too 0.
  \end{equation*}
  This gives a short exact sequence of Picard 2-stacks, then
  use~\eqref{eq:8}. Note that $\tors (i_*\cK_1^D)$ is considered as a Picard
  2-stack with no nontrivial 2-morphisms.
\end{proof}
The global long exact cohomology sequence arising from the exact sequence in the
proposition gives  part of the localization sequence for higher Chow groups
\begin{equation*}
  \cdots \too CH^1(D,1) \too CH^2(X, 1) \too CH^2(X-D,1) \too \mathrm{Pic}(D) \too CH^2(X) \too CH^2(X -D) \too 0.
\end{equation*}
This uses the fact that $CH^1(D,0) =\mathrm{Pic}(D)$, that $CH^1(D,1) = H^0(D, \cO^*)$ and $CH^1(D,j)$ is zero for $j >1$ \cite[ (viii), p.~269]{Bloch2}.

\subsection{The two gerbes associated with an intersection of divisors} For a codimension-two cycle of $X$ presented as the intersection of divisors, we know that the $\cK_2$-gerbes in Theorem \ref{k2gerbe} (Gersten gerbe) and  in \S \ref{pairs} (using Theorem \ref{primo}) (Heisenberg gerbe) are equivalent (as their class in $H^2(X, \cK_2)$ corresponds to the class of the codimension-two cycle in $CH^2(X)$ via \eqref{BQ}). We now construct an actual equivalence between them. 
\begin{theorem}\label{comparison} Suppose that the codimension-two cycle $\alpha$ is the intersection $D.D'$ of divisors $D$ and $D'$ on $X$. There is a natural equivalence\footnote{By \S \ref{support}. the set of such equivalences is a torsor over $H^1(X, \cK_2) =CH^2(X,1)$  \cite[\S 2.1]{Stach}.}
   \begin{equation*}
     \Theta: \cC_{\alpha} \to \cG_{D, D'}
   \end{equation*}
   of $\cK_2$-gerbes on $X$.
 \end{theorem}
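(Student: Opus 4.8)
The plan is to realize \emph{both} gerbes as gerbes of lifts, build $\Theta$ from one morphism of the underlying data, and then verify the construction against an explicit \v{C}ech cocycle comparison. Fix a Zariski cover $\{U_i\}$ of $X$ on which $D$ and $D'$ are cut out by local equations $f_i, g_i\in F_X^{\times}$, so that (as in \S\ref{divisors}) the torsors $\cO_D$ and $\cO_{D'}$ are represented by the $\cK_1$-cocycles $a_{ij}=f_i/f_j$ and $b_{ij}=g_i/g_j$. By the proof of Theorem~\ref{primo}, applied to $A=B=\cK_1$ and pushed forward along the symbol map $m\colon \cK_1\otimes\cK_1\to\cK_2$ as in \S\ref{pairs}, the Heisenberg gerbe $\cG_{D,D'}$ has class represented by the $\cK_2$-valued $2$-cocycle $m(a_{ij}\otimes b_{jk})=\{a_{ij},b_{jk}\}$.

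First I would produce the matching presentation of the Gersten gerbe from Corollary~\ref{diversi}(i): an object of $\cC_\alpha(U)$ is a section $u$ of $\bigoplus_{x\in X^{(1)}}j_*K_1(x)$ with $\delta u=\phi(1)$, and morphisms act through $\nu$ by elements of $K_2^{\eta}$. Assuming $D$ and $D'$ meet properly, the natural local lift is $u_i:=g_i|_D$, the restriction to $D$ of the defining equation of $D'$: this is a rational function on the codimension-one subvariety $D$ whose divisor on $D$ is exactly $D.D'=\alpha$, so $\delta u_i=\phi(1)$ over $U_i$. The differences $u_i-u_j=(g_i/g_j)|_D=b_{ij}|_D$ lie in $\ker\delta$ (since $b_{ij}$ is a unit), and they are hit under the tame symbol $\nu$ by the Steinberg symbols $w_{ij}:=\{f_i,b_{ij}\}\in K_2^{\eta}$, because $\partial_D\{f_i,b_{ij}\}=(b_{ij}|_D)^{\pm1}$ while all other tame symbols vanish (a symbol of two units). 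The Gersten cocycle is then $w_{ij}+w_{jk}-w_{ik}\in\ker\nu=\cK_2$ (using \eqref{eta}), and bilinearity and antisymmetry of the symbol collapse it:
\[
\{f_i,g_i/g_j\}+\{f_j,g_j/g_k\}-\{f_i,g_i/g_k\}=\{f_j/f_i,g_j/g_k\}=-\{a_{ij},b_{jk}\}.
\]
Thus the Gersten cocycle agrees with the Heisenberg cocycle up to the sign fixed by the orientation convention $a\otimes b'-a'\otimes b$ of \eqref{eq:3}; this confirms, now with matching explicit cocycles rather than merely via Theorem~\ref{k2gerbe}(ii), that the two gerbes are equivalent.

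To upgrade this cocycle identity to a \emph{natural} equivalence $\Theta$ of stacks, I would package the recipe above into a morphism between the two defining structures and invoke functoriality of the gerbe-of-lifts construction (\S\ref{lifting}). Concretely, the assignment $(f_i,g_i)\mapsto\bigl(g_i|_D,\{f_i,-\}\bigr)$ compares the central extension $0\to\cK_2\to H\to\cK_1\times\cK_1\to0$ (restricted along the torsor $L_{D,D'}$) with the four-term complex \eqref{eq:6}, \eqref{TEE} presenting $\cC_\alpha$ via Corollary~\ref{diversi}(ii), intertwining the two torsors and inducing the identity on the band $\cK_2$. Defining $\Theta$ on objects by sending an $H$-torsor lift of $L_{D,D'}$ to the corresponding lift $u$ of $\phi(1)$, and on morphisms by the induced $\cK_2$-equivariant map, yields a functor; the cocycle computation is precisely the verification that $\Theta$ is well defined and compatible with the $\cK_2$-action, hence an equivalence of $\cK_2$-gerbes.

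The main obstacle is this last step: the cocycle match only pins down the class, and (by the footnote) equivalences form a torsor under $H^1(X,\cK_2)$, so the real work is to show the object-level assignment is independent of the local choices---the equations $f_i,g_i$, the ordering of $D$ and $D'$, and the refinement of the cover---up to \emph{canonical} $2$-isomorphism, so that it glues to a genuine morphism of stacks rather than a mere \v{C}ech-level equivalence. Controlling the antisymmetry of the symbol (the discrepancy between $\{f,g\}$ and $\{g,f\}$) against the chosen orientation, and handling non-transverse intersections through the full four-term description of Corollary~\ref{diversi}, are the points demanding the most care. Compatibility with the canonical trivializations outside the supports (Remark~\ref{trivial-eta}) provides a useful rigidity constraint that helps fix $\Theta$ canonically.
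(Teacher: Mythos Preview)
Your approach is sound but genuinely different from the paper's. The paper does \emph{not} compute the Gersten cocycle explicitly via tame symbols; instead it builds $\Theta_U\colon \cC_\alpha(U)\to\cG_{D,D'}(U)$ object by object. Given $r\in\cC_\alpha(U)$ (say $r=(C,g)$ with $g$ a rational function on a curve $C$), the paper lifts $g$ to $\tilde g\in\cO_X(U)$ with divisor $C'$, forms the auxiliary Heisenberg gerbe $\cG_{C\cap U,C'}$ on $U$, observes that both $\cG_{D,D'}\rvert_U$ and $\cG_{C\cap U,C'}$ are trivial there (the first because $r$ exists, the second because $C'$ is principal), and then uses chosen trivializations $w,v$ to write down an explicit $H$-torsor $\Theta_U(r)$ as a \v Cech cocycle. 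So the paper never matches the two global $2$-cocycles directly; it trades the problem for a local trivialization argument.

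Your route---pick $u_i=g_i\rvert_D$, lift the transition by $w_{ij}=\{f_i,b_{ij}\}$, and collapse the $2$-cocycle to $-\{a_{ij},b_{jk}\}$---is more transparent and actually explains \emph{why} the classes agree, via the tame symbol. What the paper's method buys is that it is already organized as a functor on objects of $\cC_\alpha(U)$ for arbitrary $r$, not just the particular section $g_i\rvert_D$, so the passage to a morphism of stacks is more direct (though the paper, too, is terse on verifying functoriality and restriction-compatibility). Your acknowledged obstacle---upgrading the cocycle identity to a canonical $\Theta$ independent of $f_i,g_i$ and the cover---is real: your recipe so far produces one specific equivalence from one specific set of local equations, and the footnote's $H^1(X,\cK_2)$-torsor of equivalences means another choice could land on a different one. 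To close this you would need to show that changing $f_i,g_i$ alters your $w_{ij}$ by a $\cK_2$-valued $1$-coboundary in a way that induces a canonical $2$-isomorphism, or else imitate the paper and define $\Theta$ on \emph{all} objects of $\cC_\alpha(U)$, not just the distinguished ones.
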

 \begin{proof}
 By Theorem \ref{primo} and Theorem \ref{k2gerbe}, the classes of the gerbes $\cG_{D,D'}$ and
 $\cC_{\alpha}$  in $H^2(X, \cK_2)$ both correspond to the class of $\alpha$ in
 $CH^2(X)$. This shows that they are equivalent.

 Let us exhibit an actual equivalence. We will construct a functor $\Theta_U:
 \cC_{\alpha}(U) \to \cG_{D, D'}(U)$, compatible with restriction maps $V\subset
 U\subset X$.  
 
 Consider an object $r \in \cC_{\alpha}(U)$. We want to attach to $r$ a $H$-torsor $\Theta_U(r)$ on $U$ in a functorial manner. Each $\Theta_U(r)$ is a $H$-torsor 
which lifts the $\cK_1 \times \cK_1$-torsor $\cO_D \times \cO_{D'}$ on $U$. 
We will describe $\Theta_U(r)$ by means of \v Cech cocycles. Fix an open covering $\{U_i\}$ of $U$ and write $\cC^n(A)$ for \v Cech $n$-cochains with values in the sheaf $A$ with respect to this covering.\medskip

\paragraph{\textit{Step 1.}} Let $a= \{a_{i,j}\}$ and $b = \{b_{i,j}\}$ with $a,b \in \cC^1(O^*)$ be \v Cech 1-cocycles for $\cO_D$ and $\cO_{D'}$. Pick $h = \{h_{i,j}\} \in \cC^1(H)$ of the form
\begin{equation*}
  h_{i,j} = (a_{i,j}, b_{i,j}, c_{i,j}) \in H(U_i\cap U_j). 
\end{equation*}
We need $c_{i,j} \in K_2(U_i \cap U_j)$ such that $h$ is a \v Cech 1-cocycle (for $\Theta_U(r)$, the putative $H$-torsor). 
 Since $a,b$ are \v Cech cocycles, the \v Cech boundary $\partial h$ is of the form
 \begin{equation*}
  \partial h = \{(1,1, y_{i,j,k})\} 
 \end{equation*}
with $y =\{y_{i,j,k}\} \in \cC^2(\cK_2)$ a \v Cech 2-cocycle. This cocycle $y$
represents the gerbe $\cG_{D,D'}$ on $U$. \vspace*{1em}
  
\paragraph{\textit{Step 2.}} Recall that $\cC_{\alpha}$ is the associated stack of the prestack $U \mapsto \cC_{\alpha}(U)$ where the category $\cC_{\alpha}(U)$ has objects $u \in \oplus_{x \in X^1} j_* K_1(x)$ with  $\delta{u} = \phi(1)$ and morphisms from $u$ to $v$ are elements $a \in K_2^{\eta}$ with $\nu(a) = v-u$. 
 Since the category $\cC_{\alpha}(U)$ is non-empty, the class of the gerbe $\cC_{\alpha}$ (restricted to $U$) in $H^2(U, \cK_2)$ is zero. Since $\cC_{\alpha}$ and $\cG_{D, D'}$ are equivalent,  so the class of $\cG_{D, D'}$ in $H^2(U, \cK_2)$ is also zero.\vspace*{1em}
 
\paragraph{\textit{Step 3.}} Consider the case $r$ is given by a pair $(C, g)$ where $C$ is a divisor on $X$ and $g$ is a rational function on $C$. The condition $\delta(r) = \phi(1)$ says $\alpha \cap U$ is the intersection of $U$ with the zero locus of $g$. Assume $g \in \cO_C(C \cap U)$. Given any lifting $\tilde{g} \in \cO_X(U)$ with divisor $C'$ on $U$,  we can write $\alpha \cap U$ as the intersection of the divisors $C \cap U$ and the (principal) divisor $C'$ in $U$. By the results in \S \ref{pairs}, there is a $\cK_2$-gerbe $\cG_{C\cap U, C'}$ on $U$. As $C'$ is principal, its class in $H^1(U, \cK_1)$ is zero; so the class of $\cG_{C\cap U, C'}$ in $H^2(U, \cK_2)$ is zero.\vspace*{1em} 

\paragraph{\textit{Step 4.}} Let  $z =\{z_{i,j,k}\} \in \cC^2(\cK_2)$ be a \v Cech 2-cocycle for $\cG_{C\cap U, C'}$; 

 So $z = \partial w$ is the boundary of a \v Cech cochain $w = \{w_{i,j}\} \in \cC^1(\cK_2)$. Note that 
 $y - z = \partial v$ for a 1-cochain $v$ since $\cG_{C\cap U, C'}$ and $\cG_{D, D'}$  are equivalent as gerbes on $U$: both are trivial on $U$!

 The \v Cech cochain $h' =\{h'_{i,j}\} \in \cC^1(H)$ with
 \begin{equation*}
   h'_{i,j} = (a_{i,j}, b_{i,j}, c_{i,j})(1,1, - w_{i,j})(1,1,- v_{i,j})
 \end{equation*}
 is a \v Cech cocycle and represents the required $H$-torsor $\Theta_U(r)$ on $U$.\vspace*{1em} 

\paragraph{\textit{Step 5.}} The same argument with simple modifications works for a
general object of $\cC_{\alpha}$.
It is easy to check that $\Theta_U$ is a functor, compatible with restriction maps $V\subset
 U\subset X$, and that the induced morphism of gerbes is an equivalence.
\end{proof}

\subsection{Higher gerbes attached to smooth Parshin chains}\label{end} 

By Gersten's conjecture, the localization sequence \cite[\S7 Proposition 3.2]{Quillen}  breaks up into short exact sequences
\begin{equation*}
  0 \too K_i(V) \too K_i(V-Y) \too K_{i-1}(Y) \too 0, \qquad (i> 0)
\end{equation*}
for any smooth variety $V$ over $F$ and a closed smooth subvariety $Y$ of $V$.
Let $j:D \to X $ be a smooth closed subvariety of codimension one of $X$; write $\iota: X -D \to X$ for the open complement of $D$. Any divisor $\alpha$ of $D$ is a codimension-two cycle on $X$; one has a map ${\rm Pic}(D) \to CH^2(X)$ \cite[(iii), p.~269]{Bloch2}. 
This gives the exact sequence (for $i>0$)
\begin{equation*}
   0 \too \cK_i \too \cF_i \too j_* \cK^D_{i-1} \too 0
\end{equation*}
of sheaves on $X$ where $\cF_i = \iota_*\cK^U_i$ is the sheaf associated with the presheaf $U \mapsto K_i(U-D)$. We write $\cK_i^D$ and $\cK_i^U$ for the usual K-theory sheaves on $D$ and $U$ since the notation $\cK_i$ is already reserved for the sheaf on $X$.  
The boundary map
\begin{equation*}
  H^1(D, \cK^D_1) = H^1(X, j_*\cK^D_1) \too H^2(X, \cK_2)
\end{equation*}
is the map $CH^1(D) \to CH^2(X)$. For any divisor $\alpha$ of $D$, the $\cK^D_1$-torsor $\cO_{\alpha}$ determines a unique $j_*\cK^D_1$-torsor $L_{\alpha}$ on $X$. The $\cK_2$-gerbe $\cC_{\alpha}$ (viewing $\alpha$ as a codimension two cycle on $X$) is the lifting gerbe of the $j_*\cK^D_1$-torsor $L_{\alpha}$ (obstructions to lifting to a $\cF_2$-torsor). 

This generalizes to higher codimensions (and pursued in forthcoming work):
\begin{itemize} 
\item  (codimension three) If $\beta$ is a codimension-two cycle of $D$, then the gerbe $\cC_{\beta}$ on $D$ determines a unique gerbe $L_{\beta}$ on $X$ (with band $j_*\cK^D_2$). The obstructions to lifting $L_{\beta}$ to a $\cF_3$-gerbe is a $2$-gerbe $\cG_{\beta}$ with band $\cK_3$ on $X$. This gives an example of a higher gerbe invariant of a codimension three cycle on $X$. Gerbes with band $K_3(F_X)/{\cK_3}$ provide the codimension-three analog of Cartier divisors $H^0(X, K_1(F_X)/{\cK_1})$.

\item (Parshin chains) Recall that a chain of subvarieties
  \begin{equation*}
    X_0 \hookrightarrow X_1 \hookrightarrow X_2  \hookrightarrow X_3 \hookrightarrow \cdots  \hookrightarrow X_n = X
  \end{equation*}
  where each $X_i$ is a divisor of $X_{i+1}$ gives rise to a Parshin chain on $X$. We will call  a Parshin chain smooth if all the subvarieties $X_i$ are smooth. Iterating the previous construction provides a higher gerbe on $X_n=X$ with band $\cK_j$ attached to $X_{n-j}$  (a codimension $j$ cycle of $X_n$). 
 \end{itemize}

\end{document}